\documentclass{amsart}
\usepackage{amsmath, amscd, amssymb, amsthm}
\usepackage{bbm}
\usepackage{latexsym}
\usepackage{amsfonts}
\usepackage{graphicx}
\usepackage{xcolor}
\usepackage{needspace}
\usepackage[all,cmtip]{xy}
\usepackage[colorlinks,linkcolor=blue,breaklinks=true,urlcolor=blue,citecolor=blue,anchorcolor=blue,pagebackref]{hyperref}%
\usepackage{geometry}
\newtheorem{theorem}{Theorem}
\newtheorem{lemma}{Lemma}
\newtheorem{corollary}[theorem]{Corollary}
\newtheorem{remark}{Remark}
\newtheorem{proposition}{Proposition}

\newtheorem*{example}{Example}

\renewcommand*\backref[1]{}
\renewcommand*\backrefalt[4]{ \ifcase #1 \or (cited on page #2) \else (cited on pages #2) \fi}

\newcommand{\be}{\begin{equation}}
\newcommand{\ee}{\end{equation}}
\newcommand{\bea}{\begin{eqnarray}}
\newcommand{\eea}{\end{eqnarray}}

\def\XXint#1#2#3{{\setbox0=\hbox{$#1{#2#3}{\int}$ }
\vcenter{\hbox{$#2#3$ }}\kern-.6\wd0}}

\begin{document}

\title[Orthogonal complex structures on flat tori]{Orthogonal complex structures on flat tori}

\author{Qingsong Wang}
\address{Qingsong Wang. Hal{\i}c{\i}o\u{g}lu Data Science Institute, University of California San Diego, La Jolla, CA 92093, USA}
\email{qswang92@gmail.com}
\thanks{Zheng is the corresponding author. He is partially supported by National Natural Science Foundation of China
with the grant No.~12471039 and  12141101, and by the 111 Project D21024.}

\author{Shing-Tung Yau}
\address{Shing-Tung Yau. Yau Mathematical Sciences Center, Tsinghua University, Beijing 100084, China}
\email{{styau@mail.tsinghua.edu.cn}}

\author{Fangyang Zheng}
\address{Fangyang Zheng. School of Mathematical Sciences, Chongqing Normal University, Chongqing 401331, China}
\email{20190045@cqnu.edu.cn; franciszheng@yahoo.com}

\subjclass[2020]{53C55 (primary), 53C05 (secondary)}
\keywords{Hermitian manifold, orthogonal complex structure, flat Levi-Civita connection, twistor map, affine BSV torus.}

\begin{abstract}
  We classify compact Hermitian manifolds with flat Levi-Civita connection. This is equivalent to the classification of all orthogonal complex structures on flat tori. It generalized the work of Khan, Yang, and Zheng in 2017 where they solved the case in complex dimension three.
 \end{abstract}

\maketitle

\tableofcontents

\markleft{Qingsong Wang, Shing-Tung Yau, and Fangyang Zheng}
\markright{Orthogonal complex structures on flat tori}

\section{Introduction and statement of results}\label{intro}

Given a Hermitian manifold $(M^n,g)$, there are three canonical metric connections that are extensively studied: the Levi-Civita (Riemannian) connection $\nabla$, the Chern connection $\nabla^c$, and the Strominger-Bismut connection $\nabla^b$ (\cite{Strominger, Bismut}). When $g$ is K\"ahler, these three connections coincide, but when $g$ is not K\"ahler, the connections are mutually distinct, and we have three different kinds of geometry.

The first thing to understand in these three geometries is the structure of
the corresponding flat spaces: which compact Hermitian manifolds have an
everywhere-flat Levi-Civita, Chern, or Strominger--Bismut connection?
For the Chern connection, Boothby's classic result \cite{Boothby} in 1958 says that compact Chern flat manifolds are exactly compact quotients of complex Lie groups. In 2020, Wang, Yang, and Zheng proved \cite{WYZ} that compact Strominger-Bismut flat manifolds are exactly compact quotients of Samelson spaces \cite{Samelson}, which means a connected, simply connected Lie group equipped with a bi-invariant metric and a compatible left-invariant complex structure. Such Lie groups are products of the vector group ${\mathbb R}^k$ with compact semisimple Lie groups by Milnor's lemma \cite{Milnor}, and all Samelson spaces are known to be Strominger-Bismut flat by the work of Pittie \cite{Pittie}.

The same question can be asked for the Levi-Civita connection. Let $(M^n,g)$ be a compact Hermitian manifold with flat Levi-Civita connection. Since as a Riemannian manifold it is compact and flat, we know by Bieberbach theorem that there exists a finite unbranched cover of $M$ which is a flat torus $T^{2n}_{\mathbb R}$. Thus, up to a finite cover, the question is to find all complex structures on a flat torus that are compatible with the flat metric, or, in Simon Salamon's terminology \cite{Salamon}, all the {\em orthogonal complex structures} on flat tori. This is the main task of the present article.

{We call an orthogonal complex structure $J$ on a flat torus {\em constant} if $\nabla J=0$; equivalently, the given flat Hermitian metric $g$ is K\"ahler. The torus is then a flat complex torus.}

For $n\leq2$, every orthogonal complex structure on a flat torus is
constant. For $n\geq3$, nonconstant examples are provided by the BSV tori
\cite{KYZ}, named after Borisov, Salamon, and Viaclovsky \cite{BSV}.
Their affine generalizations will be defined in Section~\ref{sec:affine-bsv}.
An affine BSV torus $(M^n,J,g)$ is a flat real $2n$-torus equipped with a
holomorphic Riemannian submersion $\pi:M\to B$ onto an abelian variety
of complex dimension $0<r<n$. Its fibers are flat complex tori, isometric
as Riemannian manifolds to a fixed real $(2n-2r)$-torus, but their complex
structures vary nontrivially with the base point: the holomorphic family
is not isotrivial.

The {\em twistor rank} of an orthogonal complex structure is the generic
complex rank of its twistor map, which records $J$ in a parallel frame.
For an affine BSV torus, the twistor rank equals the base dimension $r$.
(The degenerate case $r=0$ is exactly the constant, K\"ahler case and is
excluded from the affine BSV definition.)

\begin{theorem} \label{thm1}
Let $(M^n,J,g)$ be a compact Hermitian manifold of complex dimension
$n\geq3$ with flat Levi-Civita connection. Then $M$ has a finite unbranched
cover which, with the lifted Hermitian structure, is either a flat complex
torus or an affine BSV torus. In the latter case, its twistor rank $r$
satisfies
\[
 0<r<n,\qquad 2r\leq(n-r)(n-r-1).
\]
\end{theorem}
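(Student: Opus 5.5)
By Bieberbach we pass to a finite cover and assume $M=\mathbb R^{2n}/\Lambda$ is a flat torus, with $g$ the Euclidean metric. In a parallel orthonormal frame, $J$ is then a smooth map $\Lambda$-periodic in $x\in\mathbb R^{2n}$,
\[
J:\mathbb R^{2n}\to SO(2n)/U(n).
\]
This is the twistor map, and integrability of $J$ means it is holomorphic (Salamon). Concretely, its $(1,0)$-space is a graph $\{v+\Phi v\}$ over a fixed reference space, with $\Phi$ a skew-symmetric $n\times n$ matrix. Locally integrability is the system $\bar\partial$-type equations in $\Phi$, which are the holomorphic-twistor condition. Because $J$ is periodic, the twistor map descends to a holomorphic map
\[
f:M\to Z=SO(2n)/U(n),
\]
where $M$ carries the complex structure $J$ itself. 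We write $r$ for its generic rank. If $r=0$, $f$ is constant, so $\nabla J=0$, $g$ is K\"ahler, and $M$ is a flat complex torus.

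Now assume $r>0$. The first goal is to show that $f$ factors through a holomorphic Riemannian submersion onto a flat complex torus $B$ of dimension $r$. The plan has three steps.

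\textbf{Step 1: coordinate dependence of $\Phi$.} Since $Z$ is compact homogeneous and $M$ is compact, the differential of $f$ produces $(1,0)$-forms. Using flatness, the $1$-forms $\partial\Phi_{ij}$ come from the Euclidean coordinates. I would show that the subspace
\[
W=\{v:\ d\Phi(v)=0 \text{ everywhere}\}
\]
is a constant $J$-invariant subspace of $\mathbb R^{2n}$ of real codimension $2r$. The argument would use that the kernel distribution of $df$ is $J$-complex, integrable, and totally geodesic. I would derive total geodesy from the Levi-Civita formula $\nabla J = [\text{derivative of }\Phi]$, together with the torsion-free identity $N_J=0$ written as $(\nabla_{JX}J)=J(\nabla_XJ)$ for orthogonal complex structures in the flat setting.

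\textbf{Step 2: the base is a torus.} The orthogonal complement $W^\perp$ is then a constant real $2r$-plane, and $J$ depends only on the projection to $W^\perp$. Periodicity would force $W$ to be rational with respect to $\Lambda$, after which we pass to a finite cover if needed. The natural tool is that the leaves of $W$ are closed because $f$ is constant on them and the fibers of a holomorphic map on a compact space are analytic. The quotient $B=\mathbb R^{2n}/(W+\Lambda)$ is then a flat torus. It is complex because $J$ preserves $W^\perp$ and is constant there: on $W^\perp$ the form $d\Phi$ must vanish in those directions. It is an abelian variety because it carries a nonconstant holomorphic map to the projective variety $Z$ with finite generic fibers.

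\textbf{Step 3: the fibers.} The fibers are flat tori with complex structure $J|_W$, which varies holomorphically with the base point and is non-isotrivial. This is exactly the affine BSV structure defined in Section~\ref{sec:affine-bsv}.

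The inequality $2r\le (n-r)(n-r-1)$ comes from counting dimensions in the fiber direction. Restricted to $W$, $J$ gives a holomorphic map
\[
B\to SO(2n-2r)/U(n-r).
\]
Its differential must be injective, since a zero differential would mean $J$ restricted to $W$ is locally constant in base directions, which would enlarge $W$. The integrability condition, via the mixed terms $\nabla_XJ$ with $X\in W^\perp$ acting on $W$, forces the image of the differential to lie in the skew forms $\Lambda^2$ of the fiber $(1,0)$-space. Moreover, the $J$-components of $d\Phi$ realize $2r$ independent real directions inside a totally real or isotropic piece of $\Lambda^{2}\mathbb C^{n-r}$ of real dimension $(n-r)(n-r-1)$. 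This yields the bound. I expect to need care to pin down that the relevant target space has real dimension $(n-r)(n-r-1)$, i.e.\ $2\binom{n-r}{2}$, and that the $2r$ directions embed injectively.

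The main obstacle is Step 1: proving that the kernel of the twistor differential is a constant linear subspace. I would first try the local structure analysis used by Khan--Yang--Zheng in dimension three. That analysis writes the integrability equations as $\Phi_{\bar k}$-relations, with $\partial\Phi$ taking values in a nilpotent or isotropic subspace, and the goal would be to show that $d\Phi$ has constant image, which then gives an abelian-torus-valued structure. Failing that, I would use the maximum principle on $|\partial\Phi|^2$, which is plurisubharmonic on compact $M$ and hence constant. Constancy of this norm should make the relevant distribution parallel.
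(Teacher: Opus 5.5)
Your proposal has a genuine gap at Step 1, which carries the whole theorem, and a second one in Step 2.

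\textbf{Step 1: the route you sketch cannot work.} You need the kernel of the twistor differential to be a constant linear subspace $W$ of codimension $2r$, equivalently that the twistor fibers are translates of one fixed flat subtorus. This is false locally. On domains in $\mathbb R^4$, orthogonal complex structures correspond to complex surfaces in the twistor space $\mathcal O(1)\oplus\mathcal O(1)\to\mathbb P^1$. The fiber of the twistor map over $\zeta$ is the slice of that surface, an essentially arbitrary holomorphic curve in $(\mathbb R^4,J_\zeta)\cong\mathbb C^2$, for instance a conic. So no manipulation of $\nabla J$ and the local identity $\nabla_{JX}J=J\nabla_XJ$ can give total geodesy; compactness has to enter.

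Your fallback does not supply compactness. $\Phi$ lives in an affine chart of $Z_n$, so $|\partial\Phi|^2$ is not a global function. The energy density of a holomorphic map from a non-K\"ahler domain into the positively curved $Z_n$ has no reason to be plurisubharmonic. Even constancy of such a norm would not make a distribution parallel.

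The missing ingredient is cohomological. The map $v\mapsto[v]\in V_{\mathbb C}/T^{0,1}_pM$ gives $(TM,J)\cong\tau^*Q_n$, and $\det Q_n\cong L_n^{2}$, so $c_1(TM)=2\tau^*c_1(L_n)$.
\begin{itemize}
\item A generic fiber $X$ of the Stein factorization of $\tau$ is a \emph{compact} complex submanifold of the complex torus $(V/\Lambda,J_0)$ with trivial normal bundle, hence $c_1(TX)=0$.
\item The Gauss--Chern formula writes the Ricci form of $X$ as minus the square of its second fundamental form. Integrating over $X$ then shows $X$ is totally geodesic, i.e.\ a translate of a $\Lambda$-rational subtorus $K_s$.
\item Since $\Lambda$-rational subspaces are countable, $K_s=K$ is independent of the fiber.
\end{itemize}

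\textbf{Step 2: the horizontal block is not shown to be constant.} You say $J$ is constant on $W^\perp$ because $d\Phi$ vanishes in those directions. But $W^\perp$ is exactly where $d\Phi$ is generically nonzero. What you need is that the horizontal block of $J$ is constant. That block is itself an orthogonal complex structure on the flat $2r$-torus $B$, and is not obviously constant once $r\geq3$. This same unjustified claim is also the only thing in your plan that excludes $r=n$, where $W=0$.

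The paper settles both points with one global argument. A Levi-Civita flat metric is balanced (Yang--Zheng), so $c_1\cdot[\omega^{n-1}]=2\int\tau^*\alpha\wedge\omega^{n-1}\geq0$, with $\alpha$ a positive form on $Z_n$. If the twistor map is generically finite, the torus is Moishezon, hence in Fujiki class $\mathcal C$. Its degree-one Albanese map then gives $c_1=-[D]$ with $D\geq0$, so the pairing is also $\leq0$, forcing $\tau^*\alpha=0$.
\begin{itemize}
\item Applied to $M$, this rules out $r=n$.
\item Applied to the induced Levi-Civita flat structure on $B$, which is Moishezon because $\tau$ factors through a generically finite map $B\to Z_n$, it makes the horizontal structure constant. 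So $B$ is an abelian variety.
\end{itemize}

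Once that is in place, your inequality is simply $r\leq\dim_{\mathbb C}Z_{n-r}=\tfrac12(n-r)(n-r-1)$ for the vertical map $B\to Z_{n-r}$ of generic rank $r$. The discussion of totally real pieces of $\Lambda^2$ is unnecessary.
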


{For nonconstant structures, the rank bound gives $r=1$ when $n=3$
or $4$, and $r\leq n-3$ when $n\geq5$. This numerical condition is
necessary but not sufficient for existence: the pair $(n,r)=(10,6)$
satisfies it with equality, but no orthogonal complex structure on a flat
torus of complex dimension $10$ has twistor rank $6$; see the final example
in Section~\ref{sec:affine-bsv}. An exact vector-bundle characterization
of the possible positive ranks is given in
Theorem~\ref{thm:rank-realization}.}

{The affine BSV tori have the following geometric
properties.} Recall that a compact complex manifold is in Fujiki
${\mathcal C}$ class \cite{Fujiki}, if it is bimeromorphic to a compact
K\"ahler manifold.

\begin{proposition}\label{prop:bsv-geometry}
Let $(M^n,J,g)$ be an affine BSV torus of twistor rank $r$.
The metric $g$ is balanced, and $M$ has Kodaira dimension $-\infty$.
Moreover, $M$ is not in Fujiki class ${\mathcal C}$, so it is neither
K\"ahlerian nor Moishezon, and its algebraic dimension satisfies
\[
 r\leq a(M)<n.
\]
\end{proposition}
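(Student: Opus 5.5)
We need to plan a proof of the geometric properties of an affine BSV torus: balanced metric, Kodaira dimension $-\infty$, not Fujiki class $\mathcal C$, and $r\le a(M)<n$. The definition appears later in Section~\ref{sec:affine-bsv}. We take it to mean the following: $M=\mathbb R^{2n}/\Lambda$ is flat with a holomorphic Riemannian submersion $\pi:M\to B$ onto an $r$-dimensional abelian variety. Its fibers are flat tori $F$ whose complex structure varies holomorphically and non-isotrivially with the base point, and the twistor map factors through $B$.

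\emph{Balanced and canonical bundle.} We work in a parallel orthonormal frame $e_1,\dots,e_{2n}$ on $\mathbb R^{2n}$. In such a frame the structure is encoded by the twistor map $\Phi: M\to SO(2n)/U(n)$, which factors through $B$, so $J$ depends only on the base coordinates. The basic identity in Salamon's framework is that an orthogonal complex structure is balanced iff the Lee form $\theta=J\delta\omega$ vanishes, i.e.\ $\delta\omega=0$.

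Concretely, we use the split frame: horizontal parallel directions $\mathbb C^r$ with constant $J$, and vertical directions $\mathbb C^{n-r}$ where $J_{F}(b)$ varies holomorphically in $b\in B$. Then $\nabla\omega$ only has components $\nabla_{X}\omega|_{V}$ with $X$ horizontal. Hence $\delta\omega=-\sum_X (\nabla_X\omega)(X,\cdot)$. Since $\nabla_X\omega$ is purely vertical and $X$ is horizontal, each term vanishes, so $d^*\omega=0$ and $g$ is balanced.

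For the canonical bundle, the vertical part of $K_M$ is $\pi^*$ of the determinant of the dual Hodge bundle $\det(\pi_*\Omega_{M/B})^{*}$. The horizontal part is trivial because $B$ is a torus. The family is non-isotrivial, and by the twistor description the period map $B\to SO(2m)/U(m)$ is a nonconstant holomorphic map. The key step is to show that $\det$ of the Hodge bundle has negative degree along some curve: its curvature is a nonpositive form, nonzero in the directions where the period map has nonzero differential. This gives $H^0(M,K_M^{\otimes m})\subseteq H^0(B,L^{m})=0$ for a line bundle $L$ on $B$ with $\deg L<0$ on some curve $C$ and $L$ nef-dual, so Kodaira dimension $-\infty$. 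We expect this sign computation via the Gauss--Codazzi/second fundamental form of the period map to be the main obstacle. We would try first to write $K_M$ explicitly as the holomorphic line bundle with transition data given by $\det$ of the $(1,0)$-projection of the constant vertical lattice, and compute its Chern curvature as $-|\partial\Phi|^2$.

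\emph{Algebraic dimension and Fujiki class.} $\pi^*$ embeds the meromorphic function field of the abelian variety $B$, so $a(M)\ge r$. If $a(M)=n$, or more generally if $M$ were in Fujiki class $\mathcal C$, we aim for a contradiction. For a class $\mathcal C$ manifold the relative Albanese/period map of the fibration is constrained: a smooth family of complex tori over a compact base whose total space is in class $\mathcal C$ has a locally constant variation of Hodge structure with a polarization. The polarization is given by a class in $H^2(M)$ restricting to a Kähler class on fibers. For families of tori with a polarization, the weight one VHS has holomorphic period map into a Siegel-type domain, a bounded domain. A holomorphic map from the compact torus $B$ (or its universal cover $\mathbb C^r$ with $\Lambda$-periodicity) into a bounded domain is constant. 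That contradicts non-isotriviality. Hence $M\notin\mathcal C$, and in particular $M$ is neither Kähler nor Moishezon, so $a(M)<n$. The delicate point here is producing the polarization, i.e.\ a class in $H^2(M,\mathbb R)$ positive on fibers, from a Kähler form on a modification. We would handle it by pushing the Kähler form forward to a closed positive current and restricting to generic fibers.
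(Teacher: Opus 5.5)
Your proposal is correct in substance, but it takes a different route from the paper at each substantive step.

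\textbf{The paper's route.} The paper does no BSV-specific computation. Balancedness is quoted from \cite{YangZ}: every Levi-Civita flat Hermitian metric is balanced. Both $\mathrm{kod}(M)=-\infty$ and $M\notin\mathcal C$ are then read off from the proof of Proposition~\ref{prop3}. There, closedness of $\omega^{n-1}$ and nonconstancy of $\tau$ give $c_1(TM)\cdot[\omega^{n-1}]=2\int_M\tau^*\alpha\wedge\omega^{n-1}>0$. This is incompatible with an effective divisor representing $-c_1(TM)$. Such a divisor exists in each case: a pluricanonical divisor if $\mathrm{kod}(M)\geq 0$, or the Albanese ramification divisor of Lemma~\ref{lemma1} if $M\in\mathcal C$.

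\textbf{Your route.} You work throughout with the fibration $\pi:M\to B$.
\begin{itemize}
\item Your computation of $\delta\omega=0$ is a valid, self-contained proof of balancedness. It rests on two facts: vertical derivatives of $J$ vanish, and horizontal ones act only on $K$.
\item Writing $K_M\cong\pi^*L$ and showing $H^0(B,L^q)=0$ is a valid route to $\mathrm{kod}(M)=-\infty$.
\item The polarization/Siegel-domain argument is a valid, if heavier, route to $M\notin\mathcal C$.
\end{itemize}
The paper's approach buys uniformity: it applies to any compact Levi-Civita flat Hermitian manifold, and it is reused in the proof of Theorem~\ref{thm1} before any fibration is known. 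Yours explains the conclusions directly through the variation of the fibers.

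\textbf{First fix: a sign slip in the canonical bundle.} The constant vertical fields generate the $K$-translations, which preserve $J$. So their $(1,0)$-parts are holomorphic, and they give $T_{M/B}\cong\pi^*f^*Q_m$. Hence
\[
K_M\cong\pi^*\det(\pi_*\Omega_{M/B})=\pi^*f^*(\det Q_m)^{-1}.
\]
This is the determinant of the Hodge bundle itself, not of its dual as you wrote. With the dual, $K_M$ would be semipositive and your vanishing would fail.

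This identification also removes what you call the main obstacle. The bundle $\det Q_m$ is the ample Pl\"ucker bundle, so $L^{-1}=f^*\det Q_m$ is nef and not numerically trivial. In fact it is ample, as in the proof of Theorem~\ref{thm:rank-realization}. Then $H^0(B,L^q)=0$ for all $q\geq1$ follows at once, with no Gauss--Codazzi computation.

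\textbf{Second point: the class $\mathcal C$ step.} Your slicing step can be carried out as follows. Slice $\mu_*\tilde\omega$ along $\pi$, then average over fiber translations. This yields a fixed constant form in $\Lambda^2K^\vee$, which is positive because the slice is K\"ahler off the exceptional image.

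There is, however, a shorter path. If $M\in\mathcal C$, then, as in the proof of Lemma~\ref{lemma1}, $M$ carries $n$ closed holomorphic $1$-forms whose classes, together with their conjugates, span $H^1(M,\mathbb C)$. Restrict them to the fibers $F_b$. Their classes span a subspace $P\subset K^\vee_{\mathbb C}$ that does not depend on $b$, with $P+\overline P=K^\vee_{\mathbb C}$ and $P\subseteq H^{1,0}(F_b)$. Since $\dim H^{1,0}(F_b)=m$, this forces $H^{1,0}(F_b)=P$ for every $b$. So $f$ is constant, contradicting $r>0$.
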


A Hermitian metric is called {\em pluriclosed} if its fundamental form
$\eta$ satisfies $\partial\bar\partial\eta=0$.
The following theorem strengthens the non-K\"ahler conclusion of
Proposition~\ref{prop:bsv-geometry}.

\begin{theorem}[No pluriclosed metrics on affine BSV tori]
\label{thm:no-pluriclosed}
Let $M^n$ be an affine BSV torus of twistor rank $r$.
Then $M$ admits no pluriclosed Hermitian metric.
\end{theorem}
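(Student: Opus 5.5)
We argue by contradiction, using the holomorphic Riemannian submersion $\pi:M\to B$ onto the abelian variety $B$ of dimension $r$, whose fibers $F_b$ are flat complex tori of dimension $m=n-r$ with non-isotrivial complex structure. The strategy is to show that a pluriclosed metric would make the fibers $\partial\bar\partial$-compatible with a Kähler-type class along the base in a way that forces the variation of complex structure to vanish. Concretely, suppose $\eta$ is a pluriclosed Hermitian metric on $M$. Since $M$ is a real torus, we first average $\eta$ over the translation action of the real torus $T^{2n}_{\mathbb R}$ on itself. Translations are isometries of $g$, but they need not be holomorphic for $J$, so this averaging must be done with care. Instead we average only over the subgroup of translations that preserve $J$. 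From the affine BSV description (Section~\ref{sec:affine-bsv}), $J$ should be invariant under translations along the fiber directions, since the twistor map factors through $\pi$. Averaging $\eta$ over the fiber torus $T^{2m}$, which acts by biholomorphisms, preserves both positivity and the condition $\partial\bar\partial\eta=0$. So we may assume $\eta$ is invariant along the fibers.

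Next we decompose the invariant $\eta$ in a frame adapted to $\pi$. In local holomorphic coordinates $w$ on $B$, the fiber coordinates can be written as $z_i=\sum_j A_{ij}(w)x_j+\dots$, with the period matrix varying holomorphically in $w$. Invariance along the fibers means that the coefficients of $\eta$ depend only on $w$ and $\bar w$. Write $\eta=\eta_B+\eta_{\mathrm{mix}}+\eta_F$. The fiber part is $\eta_F=\sqrt{-1}\,h_{i\bar j}(w)\,\theta_i\wedge\bar\theta_j$, where $\theta_i$ is the $(1,0)$-form on $M$ restricting to $dz_i$ on each fiber. Since the complex structure varies, $\theta_i$ is not closed: $d\theta_i$ contains terms of the form $\partial_w A\wedge dx$, which can be rewritten as (form on $B$) $\wedge$ ($\theta$ and $\bar\theta$). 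Thus $\bar\partial\theta_i$ involves $\bar\theta$ with coefficients given by the derivative of the twistor map (the Kodaira--Spencer class).

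We then compute the component of $\partial\bar\partial\eta$ of type (base $(1,1)$) $\wedge$ (fiber $(1,1)$), or, more efficiently, push forward. Let $\omega_B$ be a flat Kähler form on $B$. We integrate $\partial\bar\partial\eta\wedge\pi^*\omega_B^{\,r-1}\wedge\eta_F^{\,m-1}$ over $M$; alternatively we take the fiber integral $\pi_*(\eta^{m+1})$. This yields an identity of the form
\[
0=\int_M \partial\bar\partial\eta\wedge\Phi=\int_M |\kappa|^2_{h}\,dV+\partial\bar\partial\text{-exact terms},
\]
where $\kappa$ is the Kodaira--Spencer (twistor differential) tensor and $\Phi$ is chosen closed so that Stokes' theorem kills the exact terms. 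Positivity of $h$ then forces $\kappa\equiv0$. This means the twistor map is constant along $B$, i.e.\ $r=0$, contradicting the definition of an affine BSV torus.

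The main obstacle is producing a closed, suitably positive test form $\Phi$ for which the pairing isolates a positive-definite Kodaira--Spencer term. The natural candidate, built from $\eta_F^{m-1}$, is not closed. What I would try first is the case $r=1$ with $\Phi=\pi^*(\text{nothing})\wedge\sigma^{m-1}$, where $\sigma$ is the closed real $(1,1)$-form obtained by averaging $\eta_F$ over fibers and descending via the flat real structure, i.e.\ using the fixed real torus metric on the fibers, which is parallel. The key point is that the real flat metric on fibers is constant, so its associated forms are closed even though their type varies. In general $r$, I would reduce to $r=1$ by restricting to the preimage $\pi^{-1}(C)$ of a generic translated elliptic curve $C\subset B$ along which the twistor map is nonconstant. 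That preimage is again an affine BSV torus of rank one, and the restriction of a pluriclosed metric remains pluriclosed.
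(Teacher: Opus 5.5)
There is a genuine gap, and it sits exactly at the step you flag as the main obstacle. The recipe you propose there cannot work.

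\textbf{Why a closed test form fails.} If $\Phi$ is $d$-closed, then for \emph{any} real $(1,1)$-form $\eta$ you have $\partial\bar\partial\eta\wedge\Phi=d(\bar\partial\eta\wedge\Phi)$. Hence $\int_M\partial\bar\partial\eta\wedge\Phi=0$ whether or not $\eta$ is pluriclosed. Your displayed identity would then give $\int_M|\kappa|_h^2\,dV=0$ for every Hermitian metric on $M$. That would force $\kappa\equiv0$ on every affine BSV torus, which contradicts the existence of such tori. Your candidate $\sigma^{m-1}$ is closed by design, so it yields only $0=0$.

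The pluriclosed hypothesis has to be used by moving $\partial\bar\partial$ onto the test form. What you need is a form $\Phi$ that is \emph{not} closed, with $dd^c\Phi$ semipositive and not identically zero. Then $0=\int_M dd^c\eta\wedge\Phi=\int_M\eta\wedge dd^c\Phi>0$ gives the contradiction.

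\textbf{What the paper does instead.} It builds $\Phi$ from the background flat metric rather than from $\eta$: $\Phi=\omega^{n-2}$, where $\omega=\omega_H+\omega_K$ is the fundamental form of $g$.
\begin{itemize}
\item Since $\omega_H$ and $\omega_K^m$ are constant, only the term $\omega_H^{r-1}\wedge\omega_K^{m-1}$ contributes to $dd^c\omega^{n-2}$.
\item The vertical Hodge star commutes with $dd^c$ on $K$-invariant forms, which gives $dd^c(\omega_K^{m-1})=(m-1)!\ast_K(dd^c\omega_K)$.
\item A direct computation with $A=D_XI$ gives $(dd^c\omega_K)(X,J_0X,u,Iu)=2|Au|^2$.
\end{itemize}
Hence $dd^c\omega^{n-2}=c_{r,m}\,\omega_H^r\wedge\ast_K\omega_P$ with $P=\sum_a A_a^*A_a\geq0$. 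This is nonzero wherever the twistor map has rank $r$.

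Wedging with $\eta$ gives, pointwise, the pairing of the vertical part of $\eta$ with $\omega_P$. This is the ``$|\kappa|_h^2$'' term you anticipated, but it appears in $\int\eta\wedge dd^c\Phi$, not in an expansion of $\partial\bar\partial\eta$. Because the positivity is pointwise for any Hermitian $\eta$, no decomposition of $\eta$ is needed. Your averaging step is legitimate, since $K/\Lambda_K$ acts by holomorphic isometries, but it is also unnecessary.

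\textbf{The reduction to $r=1$.} This also fails in general. A generic abelian variety of dimension $\geq2$ is simple and contains no elliptic curves. The preimage of a higher-genus curve is not an affine BSV torus. The factor $\omega_H^{r-1}$ in the paper's test form handles all ranks at once, so no reduction is needed.
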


Together with balancedness, this verifies the Fino--Vezzoni conjecture
\cite{FinoVezzoni,FinoVezzoni1} for affine BSV tori.

{The horizontal--vertical splitting also imposes an algebraic restriction on the Chern torsion.} Given a Hermitian manifold $(M^n,g)$, denote by $T$ the Chern torsion tensor. It is a linear map from $\Lambda^2V$ into $V$, where $V=T_p^{1,0}M$ is the holomorphic tangent space of $M$ at any fixed point $p\in M$. Denote by $im(T)\subset V$ the image space of this map, and by $ann(T)\subset V$ the `kernel' space of $T$, namely, the set of all $v\in V$ such that $T(v,w )=0$ for any $w\in V$.

\begin{corollary} \label{cor2}
Let $(M^n,g)$ be an affine BSV torus. Denote by ${\mathcal H}^{1,0}$ the
$(1,0)$-part of the horizontal distribution of $\pi$. Then at every point in
$M$, $im(T) \subset {\mathcal H}^{1,0} \subset ann(T)$. In particular, the
Chern torsion $T$ satisfies
\begin{equation} \label{eq:Tsquare}
\sum_{s=1}^n T^{\ell}_{is} T^s_{jk}=0, \ \ \ \ \ \forall \ 1\leq i,j,k,\ell \leq n,
\end{equation}
under any unitary frame $e$, where $T(e_i,e_k)=\sum_j T^j_{ik}e_j$.
\end{corollary}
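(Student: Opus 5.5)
The plan is to compute the Chern torsion directly in a frame adapted to the affine BSV structure. Since $g$ is flat, on the universal cover we take Euclidean coordinates $x$, so $g$ is the standard metric on $\mathbb R^{2n}$ and $J$ is a map into the space of orthogonal complex structures. By the definition of affine BSV torus in Section~\ref{sec:affine-bsv}, there is an orthogonal splitting $\mathbb R^{2n}=H\oplus W$ into constant subspaces. On $H$ (real dimension $2r$) $J$ is a constant complex structure $J_H$. On $W$, $J=J_W(y)$ depends only on the horizontal coordinates $y\in H$, and the dependence is holomorphic: the fibre complex structures vary holomorphically with the base point of the abelian variety $B$. I will use this normal form. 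This already gives ${\mathcal H}^{1,0}$ as the $(1,0)$-part of the constant distribution $H$, which is $J$-invariant.

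For the torsion, I will work with $T(X,Y)=\nabla^c_XY-\nabla^c_YX-[X,Y]$ and use the formula for the Chern connection in terms of the Levi-Civita connection: $\nabla^c_XY=\nabla_XY-\frac12 J(\nabla_XJ)Y$ for $X$ of type $(1,0)$, up to the standard correction terms. For coordinate vector fields the Lie brackets vanish, and $\nabla J$ is simply the derivative $dJ$. The key facts come from the normal form:
\begin{itemize}
\item[(a)] $\partial_v J=0$ for every vertical $v$, because $J$ is independent of the fibre coordinates.
\item[(b)] $(\partial_h J)$ vanishes on $H$ and maps $W$ into $W$.
\end{itemize}
Thus $\nabla J$ is nonzero only when differentiating in horizontal directions and applying the result to vertical vectors. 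Using the well-known expression of Chern torsion through $\nabla J$ (equivalently through $d\omega$, with $T$ determined by the $(2,1)$-part of $d\omega$), I will write $T(X,Y)$ in terms of $(\nabla_XJ)Y$, $(\nabla_YJ)X$, and $J$ applied to these.

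With this formula, $T$ vanishes whenever both arguments are horizontal, and whenever both are vertical by (a). It remains to compute $T(h,v)$ for $h$ horizontal and $v$ vertical. I expect the main obstacle to be showing the following: this value is horizontal, and it vanishes as soon as either argument is horizontal. That would give both $im(T)\subset{\mathcal H}^{1,0}$ and ${\mathcal H}^{1,0}\subset ann(T)$. But these two statements are in tension with (b), since (b) says that $(\partial_hJ)v$ is vertical.

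I therefore suspect the convention works the other way. The right model is likely that $J$ on $W$ varies, while the torsion arises from the $d\omega$-components. These pair $\omega(\partial_h J\,\cdot,\cdot)$ on $W\times W$ and produce a vector along $H$ by duality. Concretely, I will compute $T$ via $g(T(X,Y),\bar Z)=$ the appropriate component of $d\omega(X,Y,\bar Z)$, using $\omega=g(J\cdot,\cdot)$. Then $d\omega=\sum_h dy^h\wedge\omega_W'(y)$, where $\omega_W'=\partial_h\omega_W$ is a $2$-form on $W$. It follows that the torsion is nonzero only with both $X$ and $Y$ vertical and $Z$ horizontal. This gives $im(T)\subset{\mathcal H}^{1,0}$. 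It also shows that $T(h,\cdot)=0$ for horizontal $h$, since $d\omega$ has exactly one horizontal slot, which is taken by $\bar Z$. Hence ${\mathcal H}^{1,0}\subset ann(T)$.

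For the type bookkeeping, holomorphicity of $y\mapsto J_W(y)$ ensures that $\partial_h\omega_W$ for $h\in{\mathcal H}^{1,0}$ contributes only to the $(2,1)$-part in the right slots. Finally, \eqref{eq:Tsquare} follows formally. In $\sum_sT^\ell_{is}T^s_{jk}$, the vector $T(e_j,e_k)$ lies in $im(T)\subset{\mathcal H}^{1,0}\subset ann(T)$, so $T(e_i,T(e_j,e_k))=0$. Expanding this in the unitary frame gives the identity for all indices.
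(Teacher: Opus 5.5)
Your closing argument via $d\omega$ is correct and takes a different route from the paper's. You should discard the first attempt rather than attribute the difficulty to a convention.

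\textbf{The first attempt.} Its claim that $T$ vanishes on two vertical arguments ``by (a)'' is false. The only nonzero components are precisely $T^i_{\alpha\beta}$, with $\alpha,\beta$ vertical and $i$ horizontal. The error is in the ``standard correction terms'' you dropped. Up to a constant, $g(T(X,Y),\bar Z)$ equals $\partial\omega(X,Y,\bar Z)$. This contains $g((\nabla_{\bar Z}J)X,Y)$, a derivative in the output direction that $(\nabla_XJ)Y$ and $(\nabla_YJ)X$ do not capture.

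\textbf{The decisive step in the second argument.} ``One horizontal slot'' only kills the all-vertical and two-horizontal components. The real content is excluding $T^\beta_{i\alpha}$, where the horizontal slot is $X$ rather than $\bar Z$. You only assert this, and the types are stated loosely: $\mathcal H^{1,0}$-derivatives feed $\bar\partial\omega$, not $\partial\omega$. Concretely, take $e_i=\tfrac1{\sqrt2}(X-iJ_0X)\in\mathcal H^{1,0}$. Integrability, equivalently $D_{J_0X}I=I\,D_XI$, gives $\partial_{e_i}I=\tfrac1{\sqrt2}(1-iI)D_XI$. Since $D_XI$ maps $K^{1,0}$ (your $W^{1,0}$) into $K^{0,1}$, this operator kills $K^{1,0}$. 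Hence $\partial_{e_i}\omega_K$ has type $(0,2)$ on $K$, and
\[
 \partial\omega=\sum_{i=1}^r\bar\varphi_i\wedge\partial_{\bar e_i}\omega_K ,\qquad
 T^i_{\alpha\beta}\ \propto\ g\bigl((\partial_{\bar e_i}I)e_\alpha,e_\beta\bigr),
\]
with $\partial_{\bar e_i}\omega_K$ of type $(2,0)$ on $K$ and all other torsion components zero.

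\textbf{Comparison with the paper.} The paper uses the Yang--Zheng formula \eqref{eq:8}, which expresses the Levi-Civita connection through the Chern connection and torsion. It works in a unitary frame with $\nabla e_i=0$ for $i\leq r$ and $\nabla_Xe_\alpha=0$ for $X\in\mathcal K$. The first condition forces $T^b_{ia}=0$ and the second forces $T^\beta_{\alpha\gamma}=0$.

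In your route, $K$-translation invariance plays the role of the second condition and the type computation that of the first. Integrability enters explicitly instead of being hidden inside \eqref{eq:8}. The paper's proof is shorter once that formula is granted. Yours is self-contained and identifies the torsion with the $(0,1)$-derivative of the vertical twistor map. So $T$ vanishes exactly where $dI$ does, in the spirit of the $dd^c\omega_K$ computation in Section~\ref{sec:pluriclosed}.

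The last step, deducing $\sum_sT^\ell_{is}T^s_{jk}=0$ from $im(T)\subset\mathcal H^{1,0}\subset ann(T)$, is the same in both.
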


{In \cite{KYZ}, the proof in dimension $3$ starts by establishing
\eqref{eq:Tsquare} through local computation. Here the global geometry of
the twistor map gives Theorem~\ref{thm1}, and the torsion identity follows
from the affine splitting.}

{Theorem~\ref{thm1} also fits into the broader study of flat canonical metric connections.} For any $t\in {\mathbb R}$, the connection $D^t=\frac{1+t}{2}\nabla^c  + \frac{1-t}{2}\nabla^b$ is called the {\em $t$-th Gauduchon connection.} {Lafuente and Stanfield proved in \cite{LS} that, for $t\neq\pm1$, a compact Hermitian manifold with flat $D^t$ must be K\"ahler.} Partial results were obtained by Fu and Zhou \cite{FuZhou}  and by Yang and Zheng \cite{YangZ1} earlier for $t$ values outside a critical interval. More generally, in \cite{ZhaoZ}, Zhao and Zheng considered the two parameter family of {\em canonical metric connections}
$$ D^t_s=(1-s)D^t+ s\nabla, \ \ \ \ \ (t,s) \in \Omega = ({\mathbb R}^2 \setminus \{ s=1\})\cup \{ (0,1)\}. $$
The following points in $\Omega$ are special:
$$ P=\big\{ (1,0), \ (-1,0), \ (0,1), \ (0,-1), \ (-1,2), \ (\frac13 , -2)\big\}. $$
The corresponding connections are respectively the Chern $\nabla^c$, the Strominger-Bismut $\nabla^b$, the Levi-Civita $\nabla$, the anti-Levi-Civita $\nabla'$, and two special connections $\nabla^+$ and $\nabla^-$. $\nabla'$ is closely related to and moves in sync with $\nabla$, while $\nabla^+$ and $\nabla^-$ are closely related to and move in sync with $\nabla^b$ (we refer the readers to \cite{ZhaoZ} for more details on this). Combining the aforementioned result of Lafuente-Stanfield with the result of Zhao-Zheng \cite[Theorem 4]{ZhaoZ}, one can state the following:

\begin{theorem}[\cite{LS, ZhaoZ}]
For any $(t,s)\in \Omega \setminus P$, if $(M^n,g)$ is a compact Hermitian manifold such that its canonical metric connection $D^t_s$ is flat, then $g$ must be K\"ahler.
\end{theorem}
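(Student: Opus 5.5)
The statement is a combination of two cited results, so the proof is bookkeeping: match each parameter region of $\Omega\setminus P$ to the theorem in \cite{LS} or \cite{ZhaoZ} that covers it.

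First I would recall the content of \cite[Theorem 4]{ZhaoZ}. For $(t,s)\in\Omega$ off the lines $s=0$ where the Gauduchon family lives, a compact Hermitian manifold with $D^t_s$ flat is forced to be either Kähler, or else $(t,s)$ is one of the exceptional points $(0,1)$, $(0,-1)$, $(-1,2)$, $(\frac13,-2)$. At those points the flatness of $D^t_s$ is equivalent to the flatness of $\nabla$ or $\nabla^b$, respectively.

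Concretely, \cite{ZhaoZ} expand the curvature of $D^t_s$ in terms of the Chern torsion and curvature. The tensor identities that flatness imposes then yield either vanishing torsion, hence Kähler, or special relations. On compact $M$, integration arguments using the Gauduchon $1$-form close off the special relations except at the listed points. This reduces the claim to the Gauduchon line $s=0$.

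On the line $s=0$ we have $D^t_0=D^t$, and the excluded points there are exactly $t=\pm1$, the Chern and Bismut connections. For $t\neq\pm1$, \cite{LS} prove that a compact Hermitian manifold with flat $D^t$ is Kähler. Since $(t,0)\in\Omega\setminus P$ forces $t\neq\pm1$, this case is covered.

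The union of the two cases covers $\Omega\setminus P$:
\begin{itemize}
\item the point $(0,1)$ belongs to $P$;
\item every $(t,s)$ with $s\neq0,1$ and not in $P$ is handled by \cite{ZhaoZ};
\item every $(t,0)$ with $t\neq\pm1$ is handled by \cite{LS}.
\end{itemize}

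The only real obstacle is checking the exact hypothesis of \cite[Theorem 4]{ZhaoZ}. Zhao–Zheng may have stated their result modulo the Gauduchon-line case, or excluding some $t$-range pending Lafuente–Stanfield. I would verify that their reduction sends every non-Kähler flat $D^t_s$ with $s\neq 0$ either to a point of $P$ or to a flat $D^{t'}$ with $t'\neq\pm1$, to which \cite{LS} then applies. If that holds, the theorem follows immediately.
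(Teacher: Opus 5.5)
Your proposal matches the paper: it gives no independent argument and simply combines \cite{LS} on the Gauduchon line $s=0$, where being off $P$ means $t\neq\pm1$, with \cite[Theorem 4]{ZhaoZ} for the rest of $\Omega\setminus P$, exactly your case split (the only point of $\Omega$ with $s=1$ is $(0,1)\in P$). Your sketch of how \cite{ZhaoZ} works internally, via Gauduchon $1$-form integrations and so on, is speculative and unnecessary, since the statement is purely this bookkeeping of the two cited results.
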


{Theorem~\ref{thm1} describes the compact Levi-Civita-flat case within this family.}

Section~\ref{sec:affine-bsv} gives the affine BSV construction, its
lattice-splitting criterion, and the vector-bundle characterization in
Theorem~\ref{thm:rank-realization}.
Section~\ref{sec:classification} proves Theorem~\ref{thm1},
Proposition~\ref{prop:bsv-geometry}, and Corollary~\ref{cor2}.
Section~\ref{sec:pluriclosed} proves Theorem~\ref{thm:no-pluriclosed}.

\vspace{0.3cm}

\section{Affine BSV tori}\label{sec:affine-bsv}

Let $V\cong {\mathbb R}^{2n}$ be a vector space equipped with an inner product $g=\langle \cdot , \cdot \rangle$ and an orientation. Denote by $Z_n$ the set of all almost complex structures on $V$ compatible with $g$ and the orientation. It is one component of the maximal isotropic Grassmannian and is a compact Hermitian symmetric space:
$$ Z_n = \mbox{OGr}^+(n, V_{\mathbb C}) \cong SO(2n)/U(n). $$
Here $V_{\mathbb C}=V\otimes {\mathbb C}\cong {\mathbb C}^{2n}$. Suppose $\Lambda \cong {\mathbb Z}^{2n}$ is a lattice in $V$. Then $M=V/\Lambda $ becomes a flat oriented $2n$-torus. The metric {descends from} $g$, and we continue to denote it by $g$. Let $J$ be an almost complex structure on $M$ compatible with both the metric and the orientation. Fix a parallel orthonormal frame on $M$ and $V$. Then under the frame each tangent space $T_pM$ is identified with $V$, so $J_p$ becomes an element in $Z_n$. The induced map
$$  \tau : M \rightarrow Z_n, \ \ \ p \mapsto J_p  $$
is called the {\em twistor map.} {The flat twistor integrability
criterion \cite[Proposition~4.5]{BSV} says that $J$ is integrable if and
only if $\tau$ is pseudoholomorphic: its differential is complex linear
with respect to $J$ and the complex structure on $Z_n$.
We apply this criterion to the following construction.}

\begin{example}[{\bf Affine BSV tori}]
{Let $(M,g)=V/\Lambda$ be a flat oriented real $2n$-torus, and choose
an integer $0<r<n$ and an orthogonal decomposition $V=H\oplus K$ with
$\dim_{\mathbb R}H=2r$. Write $\Gamma=P_H(\Lambda)$, where
$P_H:V\to H$ is the orthogonal projection.} Suppose that
\begin{enumerate}
\item $K$ is $\Lambda$-rational, namely, $\Lambda_K=\Lambda\cap K$ is a full lattice in $K$;
\item $J_0$ is an orthogonal complex structure on $H$ that makes $B=H/\Gamma$ an abelian variety; and
\item $f: B \rightarrow Z_{n-r}$ is a {holomorphic map of generic rank $r$}.
\end{enumerate}
At any $(h,k)\in H\oplus K =V$, consider the almost complex structure $J(h,k)=J_0+ J_{f(x)}$, where $x=[h]\in B$ and $J_{f(x)}$ is the almost complex structure on $K$ corresponding to $f(x)\in Z_{n-r}$. Then $J$ is an orthogonal complex structure on $M$.
\end{example}

Clearly, $J$ descends to $M$ and becomes an almost complex structure
compatible with both the orientation and the flat metric. Denote by
$\pi:M\rightarrow B$ the projection induced by $P_H$. The identity
$d\pi\circ J=J_0\circ d\pi$ and the holomorphicity of $f$ show that the
twistor map $\tau=\iota\circ f\circ\pi$ is pseudoholomorphic, where
$\iota:Z_{n-r}\hookrightarrow Z_n$ adjoins the constant horizontal complex
structure $J_0$. The criterion recalled above therefore makes $J$ integrable;
consequently, $\pi$ and $\tau$ are holomorphic.

The condition that $f$ have generic rank $r=\dim_{\mathbb C}B>0$ both
ensures that $J$ is nonconstant and identifies the base dimension with
the twistor rank. Merely requiring $f$ to be nonconstant would not ensure
the latter when $r>1$. The same generic-rank condition implies
$r\leq\dim_{\mathbb C}Z_{n-r}=\frac12(n-r)(n-r-1)$, giving the numerical
bound in Theorem~\ref{thm1}.

The map $\pi : M \rightarrow B$ is a holomorphic submersion, and each fiber $F_x=\pi^{-1}(x)$ at $x\in B$ is a flat complex torus. All fibers are isometric to the flat $(2n-2r)$-torus $K/\Lambda_K$ as Riemannian manifolds. {As complex tori, however, they do not form an isotrivial family. Indeed, complex structures on the fixed marked torus $K/\Lambda_K$ giving a single biholomorphism class lie in a countable $\operatorname{GL}(\Lambda_K)$-orbit. If the family were isotrivial, the connected image $f(B)$ would lie in this countable set and hence be a point, contradicting its positive generic rank.}

{The twistor rank is intrinsic to the Hermitian structure. The vertical
subspace $K$ can also be recovered from $J$: set}
$$ S_J =\{ w\in V \mid dJ_v(w)=0 \ \forall \ v\in V\}. $$
It is a linear subspace of $V$ and it contains $K$ since $J$ is constant along the directions of $K$. If $0\neq u\in H$ also lies in $S_J$, then it descends to a tangent direction of $B$ which lies in the kernel of $df$, where $f: B \rightarrow Z_{n-r}$ defines $J$. Since we assumed that $df$ is injective at a generic point of $B$, we see that this is impossible, hence $S_J=K$.

\begin{proposition} \label{prop1}
Suppose $F: (V/\Lambda ,J,g) \rightarrow (V'/\Lambda' , J', g')$ is a holomorphic isometry between two affine BSV tori. Then there exists an isometry $\widetilde{F}:V \rightarrow V'$ of the form $\widetilde{F}(v)=Av+b$, where $A$ is orthogonal, such that $A\Lambda =\Lambda'$, $AH=H'$, $AK=K'$, and $F$ induces a holomorphic isometry $F_B:B\rightarrow B'$ such that $F_B\circ \pi = \pi' \circ F$ and
\[
f'\bigl(F_B(x)\bigr)=A|_K\, f(x)\,(A|_K)^{-1}.
\]
In particular, $f=f'\circ F_B$ after the two twistor targets are identified by $A|_K$.
\end{proposition}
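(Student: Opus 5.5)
We lift $F$ to the universal covers and use that an isometry of flat Euclidean spaces is affine. Since $F:V/\Lambda\to V'/\Lambda'$ is an isometry of flat tori, it lifts to an isometry $\widetilde F:V\to V'$ of the universal Riemannian covers. Every isometry between Euclidean spaces has the form $\widetilde F(v)=Av+b$ with $A$ orthogonal. Equivariance of the lift, $\widetilde F(v+\lambda)=\widetilde F(v)+\lambda'$, shows $A\Lambda\subset\Lambda'$. Applying the same argument to $F^{-1}$ gives equality $A\Lambda=\Lambda'$.

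Next we compare complex structures. Holomorphicity of $F$ means $d\widetilde F\circ J=J'\circ d\widetilde F$, that is,
\[
J'(Av+b)=A\,J(v)\,A^{-1}\quad\text{for all } v\in V .
\]
Differentiating this identity gives $dJ'_{Av}(Aw)=A\,dJ_v(w)\,A^{-1}$. So $w\in S_J$ if and only if $Aw\in S_{J'}$, i.e.\ $AS_J=S_{J'}$. Since $S_J=K$ and $S_{J'}=K'$, as shown just before the proposition, we get $AK=K'$. Because $A$ is orthogonal and $H=K^\perp$, $H'=K'^\perp$, it follows that $AH=H'$. This is the key step, and it is the reason $S_J$ was introduced. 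Note that we need $S_J=K$ only as sets; that proof used the generic rank of $f$, so it is available here.

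Now we descend to the bases. Write $b=b_H+b_K$ with $b_H\in H'$. The map $h\mapsto A|_H h+b_H$ sends $\Gamma=P_H(\Lambda)$ onto $\Gamma'=P_{H'}(\Lambda')$, because $A$ commutes with the orthogonal projections: $P_{H'}A=AP_H$. Hence it induces an isometry $F_B:B\to B'$ with $F_B\circ\pi=\pi'\circ F$. Restricting the conjugation identity to $H$ and to $K$ separately, since $J$ and $J'$ are block diagonal, gives two relations:
\[
J_0'=A|_H\,J_0\,(A|_H)^{-1},\qquad J_{f'(F_B(x))}=A|_K\,J_{f(x)}\,(A|_K)^{-1}.
\]
The first shows $F_B$ is holomorphic. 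The second is the asserted relation $f'\circ F_B=A|_K\,f\,(A|_K)^{-1}$. After identifying $Z_{n-r}(K)$ with $Z_{n-r}(K')$ via conjugation by $A|_K$, this reads $f=f'\circ F_B$.

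A few side points need checking. The identification $A|_K$ must preserve orientation so that conjugation lands in the correct component $Z_{n-r}$. This holds because $A$ preserves the orientation of $V$ (as $J$ and $J'$ induce the orientations) and $A|_H$ is complex linear, hence orientation preserving on $H$; therefore $A|_K$ is orientation preserving as well. The dimensions $r=r'$ match automatically since $AH=H'$.
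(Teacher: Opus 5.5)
Your proof is correct and follows essentially the same route as the paper: you lift to an affine isometry $v\mapsto Av+b$, differentiate the holomorphicity relation to get $AS_J=S_{J'}$ and hence $AK=K'$ and $AH=H'$, then descend to $F_B$ and restrict the conjugation identity to the vertical block. The differences are minor: you get $AS_J=S_{J'}$ directly from invertibility of $A$ instead of via $F^{-1}$, your $dJ'_{Av}$ should read $dJ'_{Av+b}$, and your orientation check and verification that $F_B$ is holomorphic fill in points the paper leaves implicit.
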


In other words, if two affine BSV tori are holomorphically isometric, then their defining data agree under the orthogonal map $A$ {and its induced identification of the vertical twistor spaces}.

\begin{proof}
Given a holomorphic isometry $F$ between two affine BSV tori, its lift $\widetilde{F}$ on the universal cover is an isometry hence is in the form $\widetilde{F}(v)=Av+b$ for some $A$ orthogonal.  The holomorphicity means that
$$ A \circ J_v = J'_{Av+b} \circ A. $$
Take derivative at a constant direction $w\in V$, we get
$$ A \circ dJ_v(w) = dJ'_{Av+b}(Aw)\circ A. $$
In particular, $AS_J\subset S_{J'}$. By considering the inverse map of $F$, we get the other direction of the inclusion, hence $AS_J= S_{J'}$. From the discussion right before the proposition, we know that $S_J=K$ always holds, so we get $AK=K'$. Since $A$ is orthogonal, $AH=H'$ as well. For any $v\in V$ and any $\lambda \in \Lambda$, $\widetilde{F}(v+\lambda)=\widetilde{F}(v)+\lambda'$ for some $\lambda' \in \Lambda'$. From this one deduces $A\Lambda =\Lambda'$, hence $A\Gamma =\Gamma'$.  Write $b=(b_h,b_k)$, then it is clear that the map $F_B: B \rightarrow B'$ defined by $F_B([h]) = [Ah+b_h]$ satisfies the commutativity properties stated in the proposition. {Restricting $A\circ J_v=J'_{Av+b}\circ A$ to $K$ gives the displayed relation for $f$ and $f'$.} This completes the proof of the proposition.
\end{proof}

{To determine when an affine BSV torus is a product, we examine the
lattice relative to the intrinsic splitting $V=H\oplus K$. The
$\Lambda$-rationality of $K$ implies that $\Gamma$ is a full lattice in
$H$, and gives a short exact sequence of abelian groups}
\begin{equation}
0\longrightarrow \Lambda_K \overset{i}\longrightarrow \Lambda \overset{p} \longrightarrow  \Gamma \longrightarrow  0.
\end{equation}
Although this short exact sequence always {splits as a sequence of abstract abelian groups}, the lattice $\Lambda$ need not split with respect to the orthogonal decomposition $V=H\oplus K$, even after passage to a finite-index sublattice. To see this, let $\sigma :\Gamma \rightarrow \Lambda$ be a section of $p$, namely, a homomorphism such that $p\circ \sigma$ is the identity map of $\Gamma$. For $\gamma \in \Gamma$, write $\sigma (\gamma ) =(\gamma , \sigma_{\gamma})$. If $\tilde{\sigma}$ is another section of $p$, then $\tilde{\sigma}_{\gamma} - \sigma_{\gamma} \in \Lambda_K$ for every $\gamma \in \Gamma$. Hence the homomorphism
\begin{equation}
\rho : \Gamma \rightarrow K/\Lambda_K, \ \ \ \ \gamma \mapsto [\sigma_{\gamma}]
\end{equation}
is independent of the choice of section of $p$ and is well-defined. This $\rho$ is called the {\em vertical translation holonomy} of $\pi : M \rightarrow B$. Note that when $\rho =0$, $\sigma_{\gamma} \in \Lambda_K$ for any $\gamma \in \Gamma$, or equivalently, $(\gamma ,0)\in \Lambda$ for any $\gamma \in \Gamma$. Hence $\Lambda = \Gamma \oplus \Lambda_K$, and $M$ is isometric to the product torus $H/\Gamma \times K/\Lambda_K$ as a Riemannian manifold. {This is a Riemannian product; the complex structure on the vertical factor still varies over the base.} In this case we will call the affine BSV torus $M$ a {\bf product BSV torus}.

\begin{proposition}
{An affine BSV torus $M$ has a finite unbranched cover which, with the
lifted Hermitian structure, is holomorphically isometric to a product BSV
torus if and only if its vertical translation holonomy $\rho$ has finite image.}
\end{proposition}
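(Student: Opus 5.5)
We argue in two directions, working throughout with the exact sequence $0\to\Lambda_K\to\Lambda\to\Gamma\to 0$ and the vertical translation holonomy $\rho:\Gamma\to K/\Lambda_K$.

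\emph{Sufficiency.} Suppose $\rho(\Gamma)$ is finite, say of order $m$. Set $\Gamma'=m\Gamma$, a finite-index sublattice of $\Gamma$ (hence still a full lattice in $H$). For $\gamma\in\Gamma$ we have $\rho(m\gamma)=m\rho(\gamma)=0$, so with $\sigma$ a section of $p$ we get $\sigma_{m\gamma}=m\sigma_\gamma\in\Lambda_K$. Therefore $(m\gamma,0)=\sigma(m\gamma)-(0,\sigma_{m\gamma})\in\Lambda$. Put $\Lambda'=\Gamma'\oplus\Lambda_K\subset\Lambda$. It has finite index $[\Gamma:\Gamma']$ and is a full lattice in $V$. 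The covering $V/\Lambda'\to V/\Lambda$ is finite and unbranched, and the lifted $J$ is given by the same formula $J_0+J_{f'(x')}$, where $B'=H/\Gamma'\to B$ is an isogeny and $f'=f\circ(\text{isogeny})$.

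We then check the affine BSV axioms for the cover. First, $K$ is $\Lambda'$-rational because $\Lambda'\cap K=\Lambda_K$. Second, $B'$ is an abelian variety, being isogenous to $B$; a polarization pulls back under a finite map. Third, $f'$ is holomorphic of generic rank $r$, since the isogeny is a local biholomorphism. The projection $P_H(\Lambda')=\Gamma'$ and $\Lambda'=\Gamma'\oplus\Lambda'_K$, so the new holonomy vanishes and the cover is a product BSV torus. The identity map gives the required holomorphic isometry.

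\emph{Necessity.} Suppose a finite cover $\hat M=V/\hat\Lambda$, with $\hat\Lambda\subset\Lambda$ of finite index, is holomorphically isometric via $F$ to a product BSV torus $V'/\Lambda'$ with $\Lambda'=\Gamma'\oplus\Lambda'_K$. First note that $\hat M$ with the lifted structure is itself an affine BSV torus with the same $H,K$: $\hat\Lambda\cap K$ has finite index in $\Lambda_K$, so $K$ stays rational, and the base is an isogenous cover of $B$. Proposition~\ref{prop1} now gives an orthogonal $A$ with $A\hat\Lambda=\Lambda'$, $AH=H'$, $AK=K'$. Since $A$ preserves the splitting, it carries $\hat\Lambda$ onto $A$ of a split lattice. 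Hence $\hat\Lambda=P_H(\hat\Lambda)\oplus(\hat\Lambda\cap K)$, so for each $\hat\gamma\in\hat\Gamma=P_H(\hat\Lambda)$ we have $(\hat\gamma,0)\in\hat\Lambda\subset\Lambda$.

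Now let $N=[\Lambda:\hat\Lambda]$. For $\gamma\in\Gamma$, the element $N\sigma(\gamma)$ lies in $\hat\Lambda$, so $N\gamma\in\hat\Gamma$ and thus $(N\gamma,0)\in\Lambda$. It follows that $N\sigma_\gamma=\sigma_{N\gamma}\in\Lambda_K$, using the section independence of $\rho$ (the element $(N\gamma,0)$ lies in $\Lambda$, and any two lifts differ by $\Lambda_K$). Hence $N\rho(\gamma)=0$ for all $\gamma$. So $\rho(\Gamma)$ is a finitely generated torsion subgroup of $K/\Lambda_K$ killed by $N$, and it is therefore finite.

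The main obstacle is the step applying Proposition~\ref{prop1} to the cover. We must verify that the lifted structure on $\hat M$ is genuinely an affine BSV torus (in particular that the intrinsic subspace $S_J=K$ is unchanged), so that $A$ respects the splittings. That is what converts "holomorphically isometric to a product" into the lattice statement $\hat\Lambda=\hat\Gamma\oplus\hat\Lambda_K$. The remaining steps are elementary group theory.
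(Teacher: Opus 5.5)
Your proof is correct and follows essentially the same route as the paper: for sufficiency you build the split finite-index sublattice $m\Gamma\oplus\Lambda_K$ (the paper uses $\ker\rho\oplus\Lambda_K$), and for necessity you use Proposition~\ref{prop1} and the intrinsic identity $S_J=K$ to show the covering lattice splits, so a finite-index subgroup of $\Gamma$ lies in $\ker\rho$. Your explicit check that the cover is again an affine BSV torus, so that Proposition~\ref{prop1} applies, fills in a step the paper leaves implicit; your torsion argument with $N=[\Lambda:\hat\Lambda]$ is only a cosmetic variant of the paper's finite-index argument.
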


\begin{proof}
Let $(V/\Lambda, J,g)$ be an affine BSV torus. Suppose $\rho$ has finite image. Then $\tilde{\Gamma} =\ker (\rho)$ has finite index in $\Gamma$, and $\tilde{\Gamma} \times \{ 0\} \subset \Lambda$ by the definition of $\rho$. Write
$$ \tilde{\Lambda} = (\tilde{\Gamma} \times \{ 0\} ) \oplus (\{ 0\} \times \Lambda_K). $$
Then $\tilde{\Lambda} \subset \Lambda$ has finite index and $(V/\tilde{\Lambda}, J, g)$ is a product BSV torus which is a finite unbranched cover of the given affine BSV torus $(V/\Lambda, J,g)$.

Conversely, suppose that there is a finite-index sublattice
$\tilde{\Lambda}\subset\Lambda$ such that the induced cover
$(V/\tilde{\Lambda},J,g)$ is holomorphically isometric to a product BSV
torus $(V'/\Lambda',J',g')$. Put
$\tilde{\Gamma}=P_H(\tilde{\Lambda})$. We have
$\Gamma'=\Lambda'\cap H'$ and $\Lambda'=\Gamma'\oplus\Lambda'_{K'}$.
By the proof of Proposition \ref{prop1}, there is an orthogonal map
$A:V\rightarrow V'$ such that $AK=K'$, $AH=H'$, and
$A\tilde{\Lambda}=\Lambda'$. Hence $A\tilde{\Gamma}=\Gamma'$, and
$\tilde{\Gamma}\subset H$ is a full lattice. Both
$\tilde{\Gamma}\subset\Gamma$ are full lattices in $H$, so
$[\Gamma:\tilde{\Gamma}]<\infty$. Moreover, since $A$ preserves the two
orthogonal splittings and $\Lambda'$ is a product lattice,
$(\gamma,0)\in\tilde{\Lambda}\subset\Lambda$ for every
$\gamma\in\tilde{\Gamma}$. Thus
$\tilde{\Gamma}\subset\ker\rho$, and $\rho$ has finite image.
\end{proof}

\begin{remark}
The above proposition characterizes when a (finite unbranched cover of an) affine BSV torus can be a product BSV torus. For any $n\geq 3$ one can construct examples of affine BSV torus whose holonomy map $\rho$ has infinite image, thus any of its finite unbranched covers is not a product BSV torus. So in the main result of \cite[Theorem 1]{KYZ}, BSV tori should be understood as affine BSV tori instead of product BSV tori.
\end{remark}

Below is  such an example with twistor rank $r=1$, for any $n\ge 3$.

\begin{example}
Let $\{ e_1, e_2\}$ be an orthonormal basis of $H\cong {\mathbb R}^2$, and $\{ e_3, \ldots , e_{2n}\}$ be an orthonormal basis of $K\cong {\mathbb R}^{2n-2}$. Fix an irrational number $a$ and consider the lattice $\Lambda \subset V=H\oplus K$:
$$ \Lambda = {\mathbb Z}(e_1+ae_3) \oplus {\mathbb Z}e_2 \oplus \Lambda_K, \ \ \ \Lambda_K = \bigoplus_{j=3}^{2n} {\mathbb Z}e_j. $$
We have $\Gamma ={\mathbb Z}e_1 \oplus {\mathbb Z}e_2$. Let $J_0$ be the constant almost complex structure on $H$ defined by $J_0e_1=e_2$, and let $f$ be a non-constant holomorphic map from the elliptic curve $B=H/\Gamma$ into $Z_{n-1}$. Define
$$ J_{(h,k)} =J_0 + J_{f([h])}, \ \ \ \ (h,k)\in V. $$
Here as before $[h]$ is the image of $h$ in $B$ under the projection map, and $J_{f(x)}$ denotes the orthogonal almost complex structure on $K$ corresponding to $f(x)\in Z_{n-1}$. This gives us an affine BSV torus $(V/\Lambda, J,g)$ of twistor rank 1.  Take the section $\sigma: \Gamma \rightarrow \Lambda$ defined by
$$ \sigma (e_1)=e_1+ae_3,\ \ \ \sigma (e_2) = e_2. $$
By definition, the holonomy map $\rho$ is given by $\rho (e_1)=[ae_3]$ and $\rho (e_2)=0$ in the real $(2n-2)$-torus $K/\Lambda_K = ({\mathbb R}/{\mathbb Z})^{2n-2}$. Clearly, since $a$ is irrational, the first element has infinite order in the torus, so $\rho$ has infinite image. {Thus no finite unbranched cover, with the lifted Hermitian structure, is holomorphically isometric to a product BSV torus.}
\end{example}

\vspace{0.3cm}

The possible positive twistor ranks are governed by the vertical twistor
map, independently of the lattice-splitting question. For a fixed fiber
dimension $m\geq2$, pulling back the universal
sequence on $Z_m$ gives a maximal-isotropic subbundle of a trivial
quadratic bundle; the full-rank condition is equivalent to ampleness of
the determinant of its dual.

\begin{theorem}[A vector-bundle characterization of positive twistor ranks]
\label{thm:rank-realization}
Fix $m\geq2$, and let $K$ be an oriented Euclidean real vector space of
dimension $2m$. Write $q$ for the complex-bilinear extension of its inner
product to $K_{\mathbb C}$ and $q^\flat:K_{\mathbb C}\to
K_{\mathbb C}^{\vee}$ for the induced isomorphism. For every integer
$r>0$, the following are equivalent:
\begin{enumerate}
\item There exists an affine BSV torus of complex dimension $m+r$ and
twistor rank $r$.
\item There are an abelian variety $B$ of dimension $r$, a rank-$m$
holomorphic vector bundle $F$ on $B$, and a holomorphic subbundle inclusion
$\jmath:F^\vee\hookrightarrow K_{\mathbb C}\otimes\mathcal O_B$ whose image
is fiberwise $q$-isotropic, such that
\begin{equation}\label{eq:rank-bundle}
0\longrightarrow F^\vee\xrightarrow{\,\jmath\,}
K_{\mathbb C}\otimes\mathcal O_B
\xrightarrow{\,\jmath^\vee\circ(q^\flat\otimes1)\,}
F\longrightarrow0
\end{equation}
is exact, the classifying map of $F^\vee$ takes values in the chosen
component $Z_m$, and $\det F$ is ample.
\end{enumerate}
The classifying map associated with \eqref{eq:rank-bundle} is finite onto
its image, and the torus in {\rm (i)} can be chosen with a split lattice.
\end{theorem}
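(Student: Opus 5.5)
The plan is to translate the twistor map into the language of the tautological bundle on $Z_m$, and then to use one rigidity fact for morphisms out of abelian varieties.

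\textbf{Setup on $Z_m$.} I identify $Z_m=\mathrm{OGr}^+(m,K_{\mathbb C})$ with the component of maximal $q$-isotropic subspaces $W\subset K_{\mathbb C}$ fixed by the orientation. Here $W$ is the $(1,0)$-space of the corresponding orthogonal complex structure. I fix this convention, or its conjugate, so that the map $J\mapsto W$ is holomorphic for the complex structure on $Z_m$ used in the twistor criterion.

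Let $S\subset K_{\mathbb C}\otimes\mathcal O_{Z_m}$ be the tautological subbundle. Since $W^{\perp_q}=W$, the map $q^\flat$ induces an isomorphism $K_{\mathbb C}/S\cong S^\vee$. This gives the exact sequence
\[
0\to S\to K_{\mathbb C}\otimes\mathcal O\to S^\vee\to0 .
\]
The bundle $\det S^\vee$ is ample on $Z_m$: it is the square of the Pfaffian (spinor) line bundle, which generates $\mathrm{Pic}(Z_m)$ and is very ample. We have $m\geq2$, so $Z_m$ has positive dimension.

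A holomorphic map $f:B\to Z_m$ is the same datum as an isotropic subbundle $F^\vee=f^*S$ of $K_{\mathbb C}\otimes\mathcal O_B$ with values in the right component. Pulling back the sequence above gives \eqref{eq:rank-bundle} with $F=f^*S^\vee$, and then $\det F=f^*\det S^\vee$.

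\textbf{Key lemma.} For a holomorphic map $f$ from an abelian variety $B$ of dimension $r$ to the projective variety $Z_m$, the following are equivalent:
\begin{enumerate}
\item $f$ has generic rank $r$;
\item $f$ is finite;
\item $f^*\det S^\vee$ is ample.
\end{enumerate}

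Two of the implications are standard. Finite implies generic rank $r$ trivially. Finite implies ample because the pullback of an ample bundle under a finite morphism is ample.

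For ample implies finite: if $f$ contracted an irreducible curve $C$, then $\deg(f^*\det S^\vee|_C)=0$, which contradicts ampleness.

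For generic rank $r$ implies finite, I use the rigidity lemma. Suppose $f$ contracts an irreducible curve $C$, translated so that $0\in C$. Then $f$ is constant on every translate of $C$, by rigidity applied to $C\times B\to Z_m$, $(c,b)\mapsto f(b+c)$. Hence $f$ is constant on every translate of the abelian subvariety $A$ generated by $C$, and so $f$ factors through $B\to B/A$. Since $\dim B/A<r$, the generic rank of $f$ is less than $r$, a contradiction.

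I expect this lemma to be the main (though mild) obstacle. The point to emphasise is that for abelian varieties, generic finiteness upgrades to finiteness. This is also what gives the last assertion of the theorem: the classifying map of \eqref{eq:rank-bundle} is finite onto its image.

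\textbf{(i)$\Rightarrow$(ii).} Take an affine BSV torus with data $(H,K,\Lambda,J_0,f)$. Here $B=H/\Gamma$ is an abelian variety of dimension $r$, and $f:B\to Z_m$ is holomorphic of generic rank $r$. Put $F^\vee=f^*S$. By the setup, the sequence \eqref{eq:rank-bundle} is exact, $F^\vee$ is isotropic, and its classifying map is $f$, which lands in $Z_m$. By the lemma, $f$ is finite, so $\det F=f^*\det S^\vee$ is ample.

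\textbf{(ii)$\Rightarrow$(i).} Given the data in (ii), let $f:B\to Z_m$ be the classifying map of $F^\vee$. Then $F\cong f^*S^\vee$, because both are identified with $K_{\mathbb C}\otimes\mathcal O_B/F^\vee$ via $q^\flat$. So $f^*\det S^\vee\cong\det F$ is ample, and the lemma shows that $f$ is finite of generic rank $r$.

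Now I build the torus as follows.
\begin{enumerate}
\item Write $B=H/\Gamma$ with its complex structure $J_0$, and give $H$ a flat Kähler metric coming from a polarization, so that $J_0$ is orthogonal.
\item Take $\Lambda_K$ to be the integer span of an orthonormal basis of $K$.
\item Set $\Lambda=\Gamma\oplus\Lambda_K$ in $V=H\oplus K$, with $H\perp K$.
\end{enumerate}
Conditions (1)--(3) of the affine BSV construction then hold, with $n=m+r$ and $0<r<n$. This gives an affine BSV torus of twistor rank $r$ whose lattice is split, which proves the final claim.
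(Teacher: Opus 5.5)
Your proposal is correct. The direction (ii)$\Rightarrow$(i) matches the paper: ampleness of $\det F\cong f^*\det S^\vee$ rules out positive-dimensional fibers, and then the split-lattice torus is built with $J_B\oplus J_{f([h])}$. The genuine difference is in (i)$\Rightarrow$(ii).

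The paper does not prove finiteness there. It argues numerically instead:
\begin{itemize}
\item $\det F=f^*(\det Q_m)$ is nef.
\item By the projection formula for the generically finite map $B\to f(B)$, one has $\int_B c_1(\det F)^r>0$, so $\det F$ is big.
\item On an abelian variety a nef and big line bundle is ample, because the translation-invariant semipositive representative of its first Chern class has positive top power and is therefore positive definite.
\end{itemize}
Finiteness of the classifying map then comes afterwards, from ampleness, in the converse direction.

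You instead use the rigidity lemma to show that a map of generic rank $\dim B$ from an abelian variety to any projective variety contracts no curve. If it did, it would be invariant under the abelian subvariety generated by that curve and would factor through a lower-dimensional quotient. So $f$ is finite, and $f^*\det S^\vee$ is ample as the pullback of an ample bundle under a finite map.

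Both arguments use the group structure of $B$ exactly where a general projective base would fail: generically finite maps need not be finite, and nef and big bundles need not be ample. Your route gives a statement independent of the target bundle. It also puts all three conditions into one lemma and obtains the theorem's finiteness claim directly from (i). The paper's route is purely numerical and uses only the specific ample bundle $\det Q_m$. It avoids the rigidity lemma, and with it the GAGA step needed to treat the holomorphic map $f$ as a morphism.
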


\begin{proof}
Suppose {\rm (i)} holds. After identifying the oriented vertical Euclidean
space with $K$, the defining data give an abelian variety $B$ of dimension $r$
and a holomorphic map $f:B\to Z_m$ of generic rank $r$.
Let $S_m$ and $Q_m$ be the universal subbundle and quotient bundle on
$Z_m$. The quadratic form identifies $Q_m\cong S_m^\vee$, and
$\det Q_m$ is the ample Pl\"ucker line bundle. Pullback of the universal
sequence gives \eqref{eq:rank-bundle} with $F=f^*Q_m$ and
\[
 \det F\cong f^*(\det Q_m).
\]
Let $Y=f(B)$. Since $B$ and $Z_m$ are projective, $f:B\to Y$ is a
generically finite algebraic map of some degree $d>0$. The line bundle
$\det F$ is nef, and the projection formula gives
\[
 \int_B c_1(\det F)^r
 =d\int_Y c_1\bigl((\det Q_m)|_Y\bigr)^r>0.
\]
Thus $\det F$ is nef and big. On an abelian variety every nef and big
line bundle is ample: its translation-invariant semipositive
representative has positive top power and is therefore positive
definite. This proves {\rm (ii)}.

Now suppose {\rm (ii)} holds. The maximal-isotropic subbundle in
\eqref{eq:rank-bundle} defines a holomorphic classifying map
$f:B\to Z_m$, and $\det F\cong f^*(\det Q_m)$. If a fiber $T$ of $f$
were positive-dimensional, then $(\det F)|_T\cong\mathcal O_T$,
contradicting the ampleness of $\det F$. Thus $f$ has finite fibers.
Since $B$ is projective, the induced map $B\to f(B)$ is proper and hence
finite. In particular, $f$ has generic rank $r$.

Write $B=H/\Gamma$, and denote its constant complex structure by $J_B$.
Choose a translation-invariant Hermitian inner product on $H$ and a full
lattice $\Lambda_K\subset K$. On
$(H\oplus K)/(\Gamma\oplus\Lambda_K)$, set
\[
 J_{(h,k)}=J_B\oplus J_{f([h])}.
\]
This structure is lattice-invariant. Its twistor map is the composition
of the projection to $B$, the map $f$, and the holomorphic inclusion
$Z_m\hookrightarrow Z_{m+r}$ obtained by adjoining $J_B$.
It is therefore pseudoholomorphic, so the twistor integrability criterion
makes $J$ integrable. The resulting affine BSV torus has complex
dimension $m+r$, twistor rank $r$, and a split lattice, proving {\rm (i)}.
\end{proof}

Together with Theorem~\ref{thm1}, this equivalence characterizes existence
at rank $r>0$ on some flat torus of real dimension $2(m+r)$, with no
prescribed lattice or metric. If no data in {\rm (ii)} exist, no orthogonal
complex structure of that positive rank exists on any such flat torus;
constant structures have rank zero and are unaffected.

\begin{example}[The rank bound is not sufficient]
The pair $(n,r)=(10,6)$ satisfies the inequality in Theorem~\ref{thm1}
with equality, but no flat real $20$-torus admits an orthogonal complex
structure of twistor rank $6$. By Theorem~\ref{thm1}, such a structure
would give an affine BSV torus of the same dimension and rank after a
finite cover. Its defining data would require an abelian sixfold $B$ and a holomorphic map
$f:B\to Z_4\cong Q^6$ of generic rank $6$, where $Q^6\subset\mathbb P^7$
is the smooth six-dimensional quadric
\cite[Section~2.1, equation~(2.1)]{KuzSpinor}.
The map would be surjective and, by \cite[Lemma~2.1]{DHP}, finite.

Since $Q^6$ is simply connected, \cite[Proposition~5.1]{DHP} would then
force it to be a product of projective spaces. Its Picard group has rank
one, so this would give $Q^6\cong\mathbb P^6$. This is impossible:
their anticanonical bundles are $\mathcal O_{Q^6}(6)$ and
$\mathcal O_{\mathbb P^6}(7)$, respectively, with $\mathcal O(1)$
generating the Picard group in each case.
\end{example}

\vspace{0.3cm}

\section{{Classification and geometric consequences}}\label{sec:classification}

{The proof of Theorem~\ref{thm1} uses the first Chern class of the
tangent bundle and the fibers of the twistor map. We begin with the
relevant bundle identities on $Z_n$.}

{Recall that $Z_n=OGr^+(n,V_{\mathbb C})\cong SO(2n)/U(n)$
parametrizes orthogonal complex structures on the oriented Euclidean
$2n$-space $(V,g)$, with $V_{\mathbb C}=V\otimes\mathbb C$.}
Extend $g=\langle,\rangle$ complex linearly. An element $J\in Z_n$
corresponds to its $(-i)$-eigenspace $W=T_J^{0,1}V$, a maximal isotropic
subspace of $V_{\mathbb C}$ satisfying $W\cap\overline W=0$. With this
convention, the complex structure on $T_JZ_n$ sends a skew-adjoint
endomorphism $\dot J$ anticommuting with $J$ to $J\dot J$. Hence $Z_n$ is a
totally geodesic complex submanifold of $Gr(n,V_{\mathbb C})$, which embeds
in ${\mathbb P}(\Lambda^nV_{\mathbb C})$ via the {Pl\"ucker embedding}.
Denote by $S_n$ the universal subbundle and by $Q_n$ its quotient bundle.
Thus, over $Z_n$,
\begin{equation*}
0 \rightarrow S_n \rightarrow ({\mathcal O}_{Z_n})^{\oplus 2n} \rightarrow Q_n \rightarrow 0.
\end{equation*}
The quadratic form and maximal isotropy identify $Q_n\cong S_n^\vee$.
{Let $\iota_{\rm spin}:Z_n\hookrightarrow\mathbb P(\Delta)$ be the
Cartan, or half-spin, embedding and put
$L_n=\iota_{\rm spin}^*\mathcal O_{\mathbb P(\Delta)}(1)$.  This is the
ample generator of $\operatorname{Pic}(Z_n)$, and the Pl\"ucker polarization
is its square; equivalently \cite{BHH},}
\begin{equation} \label{eq:4}
\det Q_n \cong L_n^{\otimes 2},
\end{equation}

Now suppose that  $(M^n,J,g)$ is a compact Hermitian manifold such that as Riemannian manifold it is isometric to a flat torus $V/\Lambda$. For convenience, we will use the same letter $g$ to denote the flat metric on the torus. For any $p\in M$, consider the map
$$ (V, J_p) \rightarrow V_{\mathbb C}/T^{0,1}_p\!M, \ \ \ \ v \mapsto [v]. $$
The map is complex linear since $v+iJ_pv \in T^{0,1}_p\!M$ for any $v\in V$. Therefore, we have a complex linear smooth bundle isomorphism
\begin{equation} \label{eq:5}
(TM, J) \cong \tau^{\ast} Q_n.
\end{equation}
Note that this bundle isomorphism need not be holomorphic. Combining equations (\ref{eq:4}) and (\ref{eq:5}), we get
\begin{equation} \label{eq:6}
c_1(TM, J) = 2\,\tau^{\ast}h_n, \ \ \ \ \ h_n=c_1(L_n).
\end{equation}

{The following Albanese lemma supplies an effective divisor whose
class can be compared with \eqref{eq:6}.}

\begin{lemma} \label{lemma1}
{Let $M^n$ be a compact complex manifold diffeomorphic to the real
$2n$-torus. If $M$ belongs to Fujiki class $\mathcal C$, then its Albanese
map $a:M\to Alb(M)$ has degree one, and its ramification divisor $D$ satisfies}
\begin{equation} \label{eq:albanese-ramification}
 D\geq 0, \ \ \ K_M\cong {\mathcal O}(D), \ \ \ c_1(TM)=-[D].
 \end{equation}
\end{lemma}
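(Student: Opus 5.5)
We would first show that the Albanese map $a:M\to \mathrm{Alb}(M)$ is a finite map that is generically one-to-one, and then read off the ramification divisor from the Jacobian of $a$.

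Since $M$ is in Fujiki class $\mathcal C$, Hodge theory holds: the Hodge decomposition and the $\partial\bar\partial$-lemma are valid, by passing to a Kähler modification and using that bimeromorphic maps preserve these properties. Because $M$ is diffeomorphic to $T^{2n}$, we have $b_1=2n$, so $h^{1,0}=n$. Thus $\mathrm{Alb}(M)=H^0(\Omega^1)^\vee/H_1(M,\mathbb Z)$ is a complex torus of dimension $n$.

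The map $a$ induces an isomorphism $a_*:H_1(M,\mathbb Z)\to H_1(\mathrm{Alb},\mathbb Z)$. The cohomology ring of a torus is the exterior algebra on $H^1$, and by functoriality $a^*:H^*(\mathrm{Alb},\mathbb Z)\to H^*(M,\mathbb Z)$ is a ring map which is an isomorphism in degree $1$. Hence it is an isomorphism in all degrees, in particular in top degree $2n$. With the complex orientations (both are complex manifolds and $a$ is holomorphic), the degree of $a$ is $\pm1$. Nonnegativity of the degree of a holomorphic map then forces it to be $+1$: if the degree were $0$ or $-1$, pulling back a volume form would give $\int_M a^*\omega^n=0$ or a negative value, while $a^*\omega^n\ge0$ pointwise. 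Degree $1$ forces $a$ to be surjective and generically of maximal rank, hence generically a local biholomorphism, so $\det da\not\equiv 0$.

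Next, let $\theta=dz_1\wedge\cdots\wedge dz_n$ be a nonvanishing holomorphic $n$-form on $\mathrm{Alb}(M)$. Then $a^*\theta$ is a holomorphic section of $K_M$ that is not identically zero. We define $D=\mathrm{div}(a^*\theta)$, the zero divisor of the Jacobian determinant, which is exactly the ramification divisor of $a$. As the divisor of a holomorphic section, $D\ge 0$ and $K_M\cong\mathcal O(D)$. Taking first Chern classes gives $c_1(K_M)=[D]$, so $c_1(TM)=-c_1(K_M)=-[D]$.

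The main obstacle is the first step: identifying $H^1$ with the full rank $2n$ holomorphic/antiholomorphic splitting, so that the Albanese has dimension $n$ and $a^*$ is an isomorphism on $H^1$. This requires Fujiki class $\mathcal C$; without it $h^{1,0}$ could be smaller, as for non-Kähler structures. We would check carefully that $a^*:H^1(\mathrm{Alb},\mathbb R)\to H^1(M,\mathbb R)$ is surjective. This follows because every class in $H^1(M,\mathbb C)$ is represented by $\alpha+\bar\beta$ with $\alpha,\beta$ holomorphic $1$-forms, and these are pullbacks of constant forms on $\mathrm{Alb}$.
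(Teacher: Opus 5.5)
Your proposal is correct and follows the paper's proof essentially step for step. A K\"ahler modification gives closedness of holomorphic forms and $h^{1,0}(M)=n$; the period map identifies $H_1(M,\mathbb Z)$ with $H_1(\mathrm{Alb}(M),\mathbb Z)$; the exterior-algebra structure of torus cohomology gives degree $\pm1$ and holomorphicity gives $+1$; and $D$ is the zero divisor of the pulled-back holomorphic volume form. The only blemish is your opening announcement that $a$ is finite, which you never prove and never use, since degree one and $\det da\not\equiv 0$ are all the argument needs.
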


\begin{proof}
Let $X^n$ be a compact K\"ahler manifold  and $\pi: X\rightarrow M$ be a bimeromorphic modification. Then $\pi_1(X)=\pi_1(M)$ and $b_1(X)=b_1(M)=2n$.  Also, by Hartogs theorem, holomorphic $q$-forms on $M$ and $X$ can be identified for any $1\leq q\leq n$:
$$ \pi^{\ast} : H^0(M,\Omega^q) \xrightarrow{\sim} H^0(X,\Omega^q) . $$
In particular, every holomorphic form on $M$ is $d$-closed. The above
isomorphism and the Hodge decomposition for $X$ give
$h^{1,0}(M)=n$; choose a basis
$\varphi_1,\ldots,\varphi_n$ of $H^0(M,\Omega^1_M)$. Let
$\Gamma\cong{\mathbb Z}^{2n}$ be the free part of $H_1(M,{\mathbb Z})$.
The Albanese map $a:M\rightarrow Alb(M)={\mathbb C}^n/\Gamma$ is defined by
$$ p \mapsto [(\int_{p_0}^p \varphi_1, \ldots , \int_{p_0}^p \varphi_n)], $$
where $p_0$ is a fixed point in $M^n$ and the integral is along any path from $p_0$ to $p$. Since holomorphic one-forms and their conjugates span the first cohomology group of $M$, the
real period pairing is an isomorphism, and the integral periods form a full lattice.  Thus
\[
 a_*:H_1(M,\mathbb Z)\xrightarrow{\sim}H_1(Alb(M),\mathbb Z).
\]
Both integral cohomology rings are exterior algebras on degree one, so \(a\)
has topological degree \(\pm1\).  Holomorphic orientation makes the degree
\(+1\).  The pullback of a translation-invariant holomorphic volume form on
\(Alb(M)\) is therefore nonzero; its zero divisor is the effective ramification
divisor \(D\), which proves~\eqref{eq:albanese-ramification}.
\end{proof}

\begin{proposition} \label{prop3}
Let $(M^n,g)$ be a compact Hermitian manifold that is Levi-Civita flat. Then its Kodaira dimension $\mbox{kod}\,(M)$ is either $0$ or $-\infty$. If $\mbox{kod}\,(M)=0$, or if $M^n$ belongs to the Fujiki ${\mathcal C}$ class, then $(M^n,g)$ {has a finite unbranched cover that is a flat complex torus.}
\end{proposition}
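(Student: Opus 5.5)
Pass to the finite cover by a flat torus $V/\Lambda$, using Bieberbach, so that $\tau:M\to Z_n$ is defined and holomorphic. Plurigenera only grow under finite unbranched covers, and Kodaira dimension is invariant under them, so this costs nothing. Formula \eqref{eq:6} gives $c_1(M)=2\tau^*h_n$. The class $h_n=c_1(L_n)$ is represented by a positive $(1,1)$-form $\omega_Z$ on $Z_n$, namely the Fubini--Study pullback under the half-spin embedding. Hence $c_1(M)$ is represented by the semipositive form $2\tau^*\omega_Z$, and so $-c_1(K_M)$ is represented by a semipositive $(1,1)$-form.

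\textbf{Kodaira dimension.} Suppose $0\neq s\in H^0(M,K_M^{\otimes m})$ with $m\ge1$, and let $E=\mathrm{div}(s)\ge0$. Then $[E]=m\,c_1(K_M)=-2m\,\tau^*h_n$. Pair both sides with $\omega^{n-1}$, where $\omega$ is the flat Hermitian form; it is $d$-closed in the real sense on the torus only if $g$ is K\"ahler, so I would instead pair with a Gauduchon or balanced representative. Better: pair with the Riemannian flat structure via the closed form $\beta=\tau^*\omega_Z\wedge\Omega^{n-1}$, where $\Omega$ is a constant Kähler form coming from any fixed constant complex structure compatible with the orientation. The cleanest route is to use that on a torus every de Rham class has a unique translation-invariant representative; choose a closed strictly positive $(n-1,n-1)$-form for $J$, i.e.\ a balanced metric, whose existence for Levi-Civita flat structures is established in the BSV computation. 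Then $\int_E \beta\ge0$ while $-2m\int\tau^*\omega_Z\wedge\beta\le0$. Both must vanish, forcing $E=0$ and $\tau^*\omega_Z\wedge\beta=0$, hence $d\tau=0$. So $J$ is constant and $K_M$ is trivial. Thus $\mathrm{kod}\in\{0,-\infty\}$, and $\mathrm{kod}=0$ forces $\tau$ constant, i.e.\ $\nabla J=0$, so the cover is a flat complex torus.

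\textbf{Fujiki case.} Apply Lemma~\ref{lemma1}: $c_1(TM)=-[D]$ with $D\ge0$. Then $[D]=-2\tau^*h_n$, and the same positivity pairing shows $D=0$ and $\tau$ constant. Alternatively, $D=0$ makes $K_M$ trivial, so $\mathrm{kod}=0$ and the previous case applies.

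\textbf{Main obstacle.} The hard step is producing a closed positive $(n-1,n-1)$-form, or any strictly positive closed test form, on $(M,J)$ before knowing the classification. My fallback would be to replace $\beta$ by an integral over $M$ of $\tau^*\omega_Z\wedge\Omega^{n-1}$ with $\Omega$ built from the flat metric. I would use that $\partial\bar\partial$ of the fundamental form of a Levi-Civita flat metric is controlled, so that Gauduchon's theorem gives a Gauduchon metric $\omega_G$ with $\partial\bar\partial\omega_G^{n-1}=0$. Pairing the currents $[E]$ and $\tau^*\omega_Z$ against $\omega_G^{n-1}$ is well defined for $(1,1)$-classes admitting $\partial\bar\partial$-closed pairings. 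Here one must check that $E+2m\,\tau^*\omega_Z$ is $\partial\bar\partial$-exact, i.e.\ that the de Rham identity upgrades to $i\partial\bar\partial\log|s|^2_h$ form via the Hermitian metric on $K_M$ induced from $\tau^*L_n^{-2}$, using the isomorphism \eqref{eq:5}. The Chern curvature of that metric would also have to be identified, up to $\partial\bar\partial$-exact terms, with $-2\tau^*\omega_Z$. This identification is the step I expect to need the most care.
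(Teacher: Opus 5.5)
Your outline matches the paper's proof. You represent $c_1(TM)=2\tau^*h_n$ by the semipositive form $2\tau^*\omega_Z$, write $c_1(TM)=-\frac1m[E]$ with $E\geq0$ (or $-[D]$ in the Fujiki case via Lemma~\ref{lemma1}), and pair with a $d$-closed strictly positive $(n-1,n-1)$-form $\Psi$ to get $0\leq 2\int_M\tau^*\omega_Z\wedge\Psi=-\frac1m\int_E\Psi\leq0$. The gap is that you never produce $\Psi$, and your last paragraph admits this. The missing fact is that the given flat metric already supplies it. A Levi-Civita flat Hermitian metric is balanced, $d\omega^{n-1}=0$ \cite[Theorem~3]{YangZ}, so $\Psi=\omega^{n-1}$ works. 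Your reason for setting $\omega^{n-1}$ aside conflates $d\omega=0$ (K\"ahler) with $d\omega^{n-1}=0$ (balanced), which is strictly weaker for $n\geq3$. Your appeal to ``the BSV computation'' for a balanced metric is not an argument. Once $\Psi$ is $d$-closed, you only need the de Rham identity \eqref{eq:6}. That identity follows from the merely smooth isomorphism \eqref{eq:5}, because Chern classes are topological, so none of your $\partial\bar\partial$-level worries arise.

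Neither fallback closes the gap. The form $\tau^*\omega_Z\wedge\Omega^{n-1}$ has top degree, so its integral over $E$ is meaningless. Moreover, $\Omega^{n-1}$ built from another constant complex structure has no sign on $J$-complex hypersurfaces. The Gauduchon route pairs against $\omega_G^{n-1}$, which is only $\partial\bar\partial$-closed. You would therefore need the Chern form of some metric on $K_M^{-1}$ to equal $2\tau^*\omega_Z$ modulo $\partial\bar\partial$-exact forms, and since \eqref{eq:5} is not holomorphic this does not come for free. Concretely, let $D$ be the flat connection, $g$ extended complex-bilinearly, and $e_i$ a unitary frame. The holomorphic structures of $T^{1,0}M$ and $\tau^*Q_n$ differ by $\gamma\in\Omega^{0,1}(\mathrm{End}\,T^{1,0}M)$ with $\gamma(\bar Z)W=-(D_W\bar Z)^{1,0}$. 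Its trace is $\operatorname{tr}\gamma(\bar Z)=g\bigl(\bar Z,\sum_i(D_{e_i}\bar e_i)^{1,0}\bigr)$, a nonzero constant multiple of $(d^*\omega)^{0,1}(\bar Z)$. Hence \eqref{eq:5} is holomorphic on determinant bundles exactly when $g$ is balanced, and the fallback leads back to the same missing input. A smaller point: you should also note that Fujiki class $\mathcal{C}$ passes to finite unbranched covers, since you apply Lemma~\ref{lemma1} on the torus cover.
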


\begin{proof}
Kodaira dimension is invariant under finite unbranched covers, and Fujiki
class ${\mathcal C}$ is preserved by such covers (pull back a K\"ahler
modification). Replacing $(M^n,J,g)$ by its Bieberbach torus cover, and
retaining the same notation, we may therefore assume that its underlying
Riemannian manifold is a flat real torus $V/\Lambda$. By (\ref{eq:6}),
$c_1(TM)=2[\beta]$, where $\beta=\tau^*\alpha$ and $\alpha$ is a closed
positive $(1,1)$-form on $Z_n$ representing the first Chern class of the
ample line bundle $L_n$. Denote by $\omega$ the fundamental form of $g$.
By \cite[Theorem 3]{YangZ}, $g$ is balanced, namely, $\omega^{n-1}$ is
closed. Therefore
$$  c_1(TM) \cdot [\omega^{n-1}] = \int_M 2\beta \wedge \omega^{n-1} \geq 0. $$
Suppose first that $\mbox{kod}\,(M)\geq0$. For some $q>0$ there is a nonzero section of $qK_M$ with effective divisor of zeros $D_q$ ($D_q=0$ as a divisor when the section has no zeros). Hence
\[
 c_1(TM)=-\frac1q[D_q].
\]
Pairing this identity and $c_1(TM)=2[\beta]$ with the closed positive form $\omega^{n-1}$ gives
\[
 0\leq 2\int_M\beta\wedge\omega^{n-1}
 =-\frac1q\int_{D_q}\omega^{n-1}\leq0.
\]
Thus $\beta=0$: a nonzero semipositive $(1,1)$-form has positive pairing with $\omega^{n-1}$ on an open set. Since $\alpha$ is positive, $\tau$ is constant, so $(M,J,g)$ is a flat complex torus and has Kodaira dimension $0$. This proves that the only possibilities are $0$ and $-\infty$.

If $M$ is in Fujiki class ${\mathcal C}$, Lemma \ref{lemma1} gives $c_1(TM)=-[D]$ for an effective divisor $D$. Therefore
$$ c_1(TM) \cdot [\omega^{n-1}] = - \int_D \omega^{n-1} \leq 0, $$
{The same positivity argument forces both sides to vanish, so $\beta=0$ and $\tau$ is constant. Thus $M$ is a flat complex torus.} This completes the proof of the proposition.
\end{proof}

\begin{lemma} \label{lemma2}
Let $A$ be a complex torus of complex dimension $n$, and let $X\subset A$ be a compact {connected} complex submanifold. If $c_1(TX)=0$, then $X$ is {a translate of} a complex subtorus.
\end{lemma}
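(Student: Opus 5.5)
The plan is to use the holomorphic tangent bundle of the torus along $X$. Write $A=\mathbb C^n/\Lambda_A$, so $TA|_X\cong \mathcal O_X\otimes\mathbb C^n$ is holomorphically trivial. Let $k=\dim_{\mathbb C}X$ and consider the Gauss map
\[
\gamma: X\to Gr(k,\mathbb C^n),\qquad x\mapsto T_xX\subset\mathbb C^n ,
\]
which is holomorphic. Pulling back the universal sequence along $\gamma$ gives
\[
0\to TX\to \mathcal O_X^{\oplus n}\to N_{X/A}\to 0 .
\]
Hence $TX\cong\gamma^*S$ and $N_{X/A}\cong\gamma^*Q$, where $S$ and $Q$ are the universal sub- and quotient bundles. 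Since $c_1(\mathcal O^{\oplus n})=0$, we get $c_1(N_{X/A})=-c_1(TX)=0$. Now $\det Q$ is the ample Pl\"ucker line bundle on the Grassmannian. Let $\alpha$ be the Fubini--Study type positive $(1,1)$-form representing $c_1(\det Q)$. Then $\gamma^*\alpha$ is a closed semipositive $(1,1)$-form on $X$ whose class is $c_1(N_{X/A})=0$.

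Next we pair this with a K\"ahler class. The torus $A$ carries a flat K\"ahler form $\omega_A$, so $X$ is K\"ahler with $\omega=\omega_A|_X$. Then
\[
\int_X\gamma^*\alpha\wedge\omega^{k-1}=c_1(N_{X/A})\cdot[\omega]^{k-1}=0 .
\]
Since $\gamma^*\alpha\geq0$, the integrand is a nonnegative multiple of the volume form and it vanishes identically, so $\gamma^*\alpha=0$. Positivity of $\alpha$ then forces $d\gamma=0$. Because $X$ is connected, $\gamma$ is constant, and every tangent space of $X$ equals a fixed complex subspace $W\subset\mathbb C^n$. (If $k=0$, $X$ is a point, which is a translate of the trivial subtorus, so we may assume $k\geq1$.)

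Finally we pass from constant tangent spaces to a subtorus. Lift to the universal cover $\mathbb C^n$ and take a connected component $\tilde X$ of the preimage of $X$. It is a connected complex submanifold with all tangent spaces equal to $W$, so it is an open subset of an affine subspace $p+W$. Since $X$ is compact, $\tilde X$ is closed in $\mathbb C^n$, and it is also open in $p+W$; hence $\tilde X=p+W$. Thus $X$ is the image of $p+W$ in $A$. This image is a translate of the image of $W$, which is compact because $X$ is. A connected compact immersed image of the linear subspace $W$ is the closed subgroup $W/(W\cap\Lambda_A)$, and here $W\cap\Lambda_A$ must be a full lattice in $W$, since otherwise the image would be noncompact or would fail to be a $k$-dimensional submanifold. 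So $X$ is a translate of a complex subtorus.

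The step I expect to need the most care is the positivity argument, namely showing that $c_1(N)=0$ forces the Gauss map to be constant. We cannot only use that $\gamma^*\det Q$ is numerically trivial; the identity $\gamma^*\alpha=0$ must hold pointwise. The K\"ahler pairing above settles this, provided $X$ is K\"ahler, which is automatic as a submanifold of $A$. The last step also needs a check that the image of $W$ is closed and equal to $X$. This follows from compactness of $X$ together with the fact that $X$ is a union of leaves of the linear foliation by translates of $W$.
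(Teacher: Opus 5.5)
Your proof is correct and is essentially the paper's argument in Grassmannian language. The differential of your Gauss map $\gamma$ is the second fundamental form $\psi$, and if you take $\alpha$ to be the Chern curvature form of $\det Q$ for the metric induced from $\mathbb C^n$, then $\gamma^*\alpha=-\frac{i}{2\pi}\operatorname{tr}(\Theta')$ pointwise. So your pairing with $\omega^{k-1}$ is exactly the paper's integration of the nonpositive Ricci form against $h^{m-1}$, with the paper's moving-frame Gauss equation replaced by ampleness of the Pl\"ucker bundle (the same device the paper uses with the twistor map in Proposition~\ref{prop3}), and your last paragraph spells out the passage from constant tangent spaces to a translated subtorus that the paper compresses into ``totally geodesic and therefore a translate of a complex subtorus.''
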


\begin{proof}
Let $g$ be a flat K\"ahler metric on $A$, and let $h$ be its restriction to $X$. {Put $m=\dim_{\mathbb C}X$. The Gauss--Chern formula expresses the Ricci form of $h$ as a nonpositive form determined by the second fundamental form.}

Fix any $p\in X$. {Choose a local unitary frame $\{e_1,\ldots,e_n\}$ on a neighborhood $U\subset A$ of $p$, with $e_1,\ldots,e_m$ tangent to $X$ along $U\cap X$.} Let $\varphi$ be the local unitary coframe of $(1,0)$ forms dual to $e$. Denote by $\theta$ the matrix under $e$ of the Levi-Civita connection $\nabla$, namely, $\nabla e_i = \sum_{j=1}^n \theta_{ij}e_j$ for each $i$. Write
$$ \theta = \left[ \begin{array}{cc} \theta' & \psi \\ - \psi^{\ast} & \theta'' \end{array} \right] , \ \ \ \ \theta' =(\theta_{ij})_{1\leq i,j\leq m}.$$
{The pullbacks of $\varphi_{m+1},\ldots,\varphi_n$ to $U\cap X$ vanish.
Since $X$ is a complex submanifold, each entry of $\psi$, restricted to
$X$, is a $(1,0)$-form.} Since $g$ is flat, we have $d\theta -\theta \wedge \theta =0$. Its upper left corner block gives
$ \Theta' = d\theta' - \theta' \wedge \theta' = - \psi \wedge \psi^{\ast}$, hence
$$ \mbox{tr}(\Theta') = - \mbox{tr}(\psi \wedge \psi^{\ast}) = - \sum_{i=1}^m \sum_{\alpha=m+1}^n \theta_{i\alpha} \wedge \overline{\theta_{i\alpha}} , $$
where each $\theta_{i\alpha}$ is a local $(1,0)$-form on $X$. The form $\frac{i}{2\pi}\,\mbox{tr}(\Theta')$ represents $c_1(TX)$. {After wedging with $h^{m-1}$ and integrating over $X$, the assumption $c_1(TX)=0$ gives the integral of a nonpositive multiple of $\sum_{i,\alpha}|\theta_{i\alpha}|^2$ equal to zero.} Hence every $\theta_{i\alpha}$ vanishes, so $X\subset A$ is totally geodesic and therefore a translate of a complex subtorus.
\end{proof}

\Needspace{6\baselineskip}

\begin{proof}[{\bf Proof of Theorem \ref{thm1}.}]

By the Bieberbach theorem, $(M,J,g)$ has a finite unbranched cover whose
underlying Riemannian manifold is a flat $2n$-torus. Pull back $J$ and $g$
to this cover. Since the theorem classifies precisely such a finite cover,
we may write it as $M=V/\Lambda$. Denote by $\tau:M\rightarrow Z_n$ its
twistor map and by $r=\dim_{\mathbb C}\tau(M)$ its twistor rank. If $r=0$,
then $M$ is a flat complex torus, so assume $r>0$.

The case $r=n$ cannot occur. Indeed, $\tau$ would then be generically finite
onto the projective variety $\tau(M)$, so $M$ would be Moishezon and hence in
Fujiki class $\mathcal C$. Since $M$ is already a real torus, the proof of
Proposition \ref{prop3} would force $\tau$ to be constant, contradicting
$r=n>0$. Thus $0<r<n$.

The image $\tau(M)$ is an $r$-dimensional projective
variety. Take the Stein factorization $\tau=\nu\circ\sigma$:
$$ M \xrightarrow{\sigma }S \xrightarrow{\nu} \tau(M), $$
where $S$ is an irreducible normal projective variety of dimension $r$,
$\nu$ is a finite morphism, and each fiber of $\sigma$ is connected.

Let $S'\subset S$ be the complement of the singular locus of $S$ and the
critical value set of $\sigma$. Then $S'$ is open and dense in $S$, and
$\sigma' : M' \rightarrow S'$ is a proper holomorphic submersion with
connected fibers. Here $M'=\sigma^{-1}(S')$ and $\sigma'=\sigma|_{M'}$.
{Moreover, $S'$ is connected: the regular locus of an irreducible normal
variety is connected, and deleting a proper complex-analytic subset does
not disconnect it.}

Fix any $s\in S'$, and let $X=X_s=\sigma^{-1}(s)$ be a fiber of $\sigma'$.
Then $\tau(X)=\nu\circ\sigma(X)$ is a point $J_0\in Z_n$.
Let $A=(V/\Lambda,J_0)$ be the complex torus. Then $X\subset A$ is a compact
complex submanifold. {Since $X$ is a fiber of the submersion $\sigma'$,
its normal bundle in $M'$ is trivial.} Hence by (\ref{eq:6}),
$$ c_1(TX)= \iota^{\ast} c_1(TM) = 2\,\iota^{\ast} \tau^{\ast} h_n = 2 \,(\tau\circ \iota )^{\ast} h_n=0, $$
where $\iota:X\rightarrow M$ is the inclusion map.

By Lemma~\ref{lemma2}, each $X_s$ is a translate of a complex subtorus.
Thus there are a point $x_s\in M$ and a real $(2n-2r)$-dimensional linear
subspace $K_s\subset V$ such that
\[
 X_s=x_s+K_s/(K_s\cap\Lambda).
\]
The subspace $K_s$ is $\Lambda$-rational, meaning that $K_s\cap\Lambda$
is a full lattice in $K_s$.

Local sections make the map $s\mapsto K_s$ continuous in the real
Grassmannian $Gr_{\mathbb R}(2n-2r,V)$. Its image is connected and lies
in the countable set of $\Lambda$-rational subspaces, hence is a point.
Thus $K_s$ is independent of $s$ and equals a fixed linear subspace
$K\subset V$.

Denote by $H\subset V$ the orthogonal complement of $K$, and write
$\mathcal K$ and $\mathcal H$ for the corresponding translation-invariant
distributions on $M$. {On} the open dense subset $M'$, we have
$J\mathcal K=\mathcal K$ and $d\tau(\mathcal K)=0$.
By continuity, these identities extend to all of $M$.
{Orthogonality of $J$ also gives $J\mathcal H=\mathcal H$.}
The constant vector fields tangent to $\mathcal K$ are complete,
so $d\tau(\mathcal K)=0$ integrates to invariance under every $K$-translation.

Let $P_H:V\rightarrow H$ be the orthogonal projection, and write
$\Gamma=P_H(\Lambda)$. Since $\Lambda_K:=K\cap\Lambda$ is a full lattice
in $K$, $\Gamma$ is a full lattice in $H$.
{By the $K$-translation invariance, $J$ descends to an almost complex
structure $\tilde J$ on $B=H/\Gamma$, and $P_H$ induces a smooth submersion
$\pi:M\to B$. The Nijenhuis tensor of $\tilde J$ is the projection of that
of $J$, so $\tilde J$ is integrable and $\pi$ is holomorphic.}
The twistor map factors holomorphically through $\pi$, namely,
$\tau=\tilde\tau\circ\pi$, where $\tilde\tau:B\rightarrow Z_n$.

{The map $\tilde\tau$ has generic rank $r$ and is therefore generically
finite onto the projective variety $\tau(M)$.} Hence $B$ is Moishezon and
belongs to the Fujiki class $\mathcal C$.
The induced metric on $B$ is Levi-Civita flat. Since $B$ is already a real
torus, the proof of Proposition \ref{prop3} applies without taking another
cover and forces the twistor map of {the induced Hermitian structure}
$(B,\tilde J)$ to be constant. Thus $\tilde J$ is parallel in the given
flat frame, so $B$ is a complex torus; since it is Moishezon, it is an
abelian variety.

{Finally, put $m=n-r$. Since $J$ preserves the orthogonal decomposition
$V=H\oplus K$, the map $\tilde\tau$ factors through
$Z_r\times Z_m\hookrightarrow Z_n$.}
Its $Z_r$ component is the constant horizontal structure
$\tilde J$, while the vertical factor is a holomorphic map
$f:B\to Z_m$ of generic rank $r>0$. Hence $Z_m$ has positive dimension,
so $m\geq2$, {and}
\[
 r\leq\dim Z_m=\frac12m(m-1).
\]
{The decomposition $V=H\oplus K$, the rationality of $K$, the abelian
variety $B$, and the map $f$ give precisely the defining data of an affine
BSV torus. Thus $M=(V/\Lambda,J,g)$ is an affine BSV torus.}
This completes the proof of Theorem \ref{thm1}.
\end{proof}

\begin{proof}[Proof of Proposition~\ref{prop:bsv-geometry}.]
The metric $g$ is Levi-Civita flat by construction, so it is balanced by
\cite[Theorem~3]{YangZ}. The twistor map is nonconstant; hence the proof of
Proposition~\ref{prop3} gives Kodaira dimension $-\infty$ and excludes
Fujiki class $\mathcal C$. In particular, $M$ is neither K\"ahlerian nor
Moishezon. The holomorphic projection $\pi:M\to B$ onto the
$r$-dimensional abelian variety gives $a(M)\geq r$, while the exclusion of
Moishezon manifolds gives $a(M)<n$.
\end{proof}

{To prove Corollary~\ref{cor2}, we use the relation between the
Levi-Civita connection and the Chern torsion.} Let $(M^n,g)$ be a Hermitian manifold, and $e$ be a local unitary frame with dual coframe $\varphi$. Denote by $T^j_{ik}$ the Chern torsion components under $e$, namely, $T^c(e_i,e_k)=\sum_{j=1}^n T^j_{ik}e_j$. Denote by $\nabla $ and $\nabla^c$ the Levi-Civita and Chern connection of $g$, respectively, and by $\theta$ the Chern connection matrix, namely, $\nabla^ce_i = \sum_j \theta_{ij} \otimes e_j$. From \cite{YangZ}, we get the Levi-Civita connection matrix
 \begin{equation} \label{eq:8}
 \nabla e_i = \sum_j \big\{ \theta^1_{ij}e_j + \overline{\theta^2_{ij}}\overline{e}_j\big\}, \ \ \ \ \theta^1_{ij} = \theta_{ij} + \frac12 \sum_k \big\{ T^j_{ik}\varphi_k - \overline{T^i_{jk}} \overline{\varphi}_k \big\} , \ \ \  \theta^2_{ij} =\frac12 \sum_k \overline{ T^k_{ij} }\varphi_k.
 \end{equation}

\begin{proof}[{\bf Proof of Corollary \ref{cor2}.}]

Let $M^n=(V/\Lambda , J,g)$ be an affine BSV torus of twistor rank $r$. Let $V=H\oplus K$ be the decomposition into horizontal and vertical directions. By definition,  $J$ is invariant under $K$-translations, and when restricted on the horizontal distribution ${\mathcal H}$, $J$ is constant. Therefore we can locally choose a unitary frame $e$ such that $\{ e_1, \ldots , e_r\}$ spans ${\mathcal H}^{1,0}$, $\nabla e_i=0$ for each $1\leq i\leq r$, and $\nabla_X e_{\alpha} =0$ for each $r+1\leq \alpha \leq n$ and any $X\in {\mathcal K}$. For any $1\leq i\leq r$ and any $1\leq a\leq n$, by (\ref{eq:8}) we have
$$ 0 = \langle \nabla e_i , e_a\rangle = \overline{\theta^2_{ia}} = \frac12 \sum_b T^b_{ia} \overline{\varphi}_b. $$
Hence $T^b_{ia}=0$ for any $1\leq i\leq r$ and any $1\leq a,b\leq n$. Similarly, for any $r+1\leq \alpha, \beta,\gamma \leq n$, by $\nabla_{\overline{e}_{\beta}} e_{\alpha} =0$ and (\ref{eq:8}) we get
$$ 0 =\langle \nabla_{\overline{e}_{\beta}} e_{\alpha} , e_{\gamma} \rangle = \overline{ \theta^2_{\alpha \gamma }(e_{\beta})} = \frac12 T^{\beta}_{\alpha \gamma}.$$
So $ T^{\beta}_{\alpha \gamma}=0$ for any $r+1\leq \alpha, \beta,\gamma \leq n$. Combine this with the previous $T^{\ast}_{i\ast}=0$ for any $1\leq i\leq r$, we know that the only possibly nonzero Chern torsion components are $T^i_{\alpha \beta}$, where $1\leq i\leq r$ and $r<\alpha , \beta \leq n$. In particular, $T^{\alpha}_{\ast \ast}=0$ means that the image of $T$ is contained in ${\mathcal H}^{1,0}$, while $T^{\ast}_{i\ast}=0$ means that ${\mathcal H}^{1,0}$ is contained in the kernel $\mbox{ann}(T)$. This completes the proof of Corollary \ref{cor2}.
\end{proof}

\section{{Pluriclosed metrics}}\label{sec:pluriclosed}

\begin{proof}[Proof of Theorem~\ref{thm:no-pluriclosed}.]
Intuitively, variation of the fibers' complex structures produces a
nonzero semipositive $dd^c$-exact $(n-1,n-1)$-form, which is incompatible
with a pluriclosed metric on a compact manifold.

Put $m=n-r$.  The rank condition implies $m\geq2$.  On the universal
cover, use the fixed orthogonal splitting $V=H\oplus K$ and write
\[
 J=J_0\oplus I(h),\qquad
 \omega=\omega_H+\omega_K,
\]
where $I:B\to Z_m$ is the vertical twistor map and $\omega$ is the
fundamental form of the flat metric.  These tensors, as well as the fixed
vertical Hodge operator $\ast_K$, are invariant under the affine deck
translations and therefore descend to $M$; no splitting of the lattice is
needed.

We use the convention $d^c=i(\bar\partial-\partial)$, so that
$dd^c=2i\partial\bar\partial$.  Let $X$ be a constant horizontal unit vector,
put $Y=J_0X$, and set $A=D_XI$.  Holomorphicity of $I$ and orthogonality give
\[
 D_YI=IA,\qquad IA+AI=0,\qquad A^*=-A.
\]
In flat coordinates $X=\partial_x$, $Y=\partial_y$, differentiation of
$I_y=II_x$ yields
\[
 A_y=A^2+I I_{xx},\qquad
 I_yA+IA_y=2IA^2-I_{xx}.
\]
For constant vertical vectors $u,v$, direct application of
$d^c\alpha(U,V,W)=-d\alpha(JU,JV,JW)$ to the real $(1,1)$-form
$\alpha=\omega_K$ now gives
\begin{equation}\label{eq:vertical-hessian}
 \begin{split}
 (dd^c\omega_K)(X,Y,u,v)
 &=-g\bigl((I_{xx}+I_yA+IA_y)u,v\bigr)\\
 &=-2g(IA^2u,v)=2g(IP_Xu,v),
 \end{split}
\end{equation}
where $P_X=A^*A=-A^2\geq0$.  In particular,
\[
 (dd^c\omega_K)(X,Y,u,Iu)=2|Au|^2.
\]

Extend the vertical Hodge star to mixed forms by
$\ast_K(\alpha_H\wedge\beta_K)=\alpha_H\wedge\ast_K\beta_K$. The operator
$\ast_K$ is constant in the flat frame.  On the
$K$-translation-invariant forms used here, $d$ has only horizontal
derivatives, so $\ast_K$ commutes with $d$.  It also commutes pointwise with $J$, because $I(h)$ is an
orientation-preserving isometry of $K$.  Since $\ast_K$ changes total degree
by an even number on the forms at issue, it commutes with $d^c$ as well.
Using $\omega_K^{m-1}=(m-1)!\ast_K\omega_K$, we obtain
\begin{equation}\label{eq:vertical-star}
 dd^c(\omega_K^{m-1})=(m-1)!\ast_K(dd^c\omega_K).
\end{equation}

Only one term in the binomial expansion of $\omega^{n-2}$ survives after
applying $dd^c$: the term with horizontal degree $2r-2$ and vertical degree
$2m-2$.  Indeed, $\omega_H$ is constant, $\omega_K^m=m!\operatorname{vol}_K$
is constant, and any term already of top horizontal degree is killed by the
two new horizontal differentials.  Hence
\begin{equation}\label{eq:positive-current-expansion}
 dd^c(\omega^{n-2})
 =\binom{n-2}{r-1}\omega_H^{r-1}\wedge
 dd^c(\omega_K^{m-1}).
\end{equation}

Choose a horizontal orthonormal frame
$X_1,J_0X_1,\ldots,X_r,J_0X_r$, put $A_a=D_{X_a}I$, and set
\[
 P=\sum_{a=1}^r A_a^*A_a,
 \qquad \omega_P(u,v)=g(IPu,v).
\]
Taking the horizontal trace in \eqref{eq:positive-current-expansion} and
using \eqref{eq:vertical-hessian}--\eqref{eq:vertical-star} gives
\begin{equation}\label{eq:positive-current}
 \begin{aligned}
 dd^c(\omega^{n-2})
 &=c_{r,m}\,\omega_H^r\wedge\ast_K\omega_P,\\
 c_{r,m}
 &=\frac{2(m-1)!}{r}\binom{n-2}{r-1}
 =\frac{2(n-2)!}{r!}>0.
 \end{aligned}
\end{equation}
The form $\omega_P$ is semipositive of type $(1,1)$ on $K$, so
$\ast_K\omega_P$ is a semipositive $(m-1,m-1)$-form.  Thus the right-hand
side of \eqref{eq:positive-current} is a semipositive
$(n-1,n-1)$-form.  It is nonzero on the dense open set where $I$ has rank
$r$, because there $P\neq0$.

If $\eta$ were a pluriclosed Hermitian form on $M$, positivity and compactness
would give
\[
 0<\int_M\eta\wedge dd^c(\omega^{n-2}).
\]
On the other hand, integration by parts and $dd^c\eta=0$ give
\[
 \int_M\eta\wedge dd^c(\omega^{n-2})
 =\int_M dd^c\eta\wedge\omega^{n-2}=0,
\]
a contradiction.
\end{proof}

Note that for $n=3$ and $r=1$, this obstruction was obtained by Khan, Yang, and
Zheng \cite[Lemma~6]{KYZ}. The argument above extends it to arbitrary
vertical dimension and twistor rank, including nonsplit affine lattices,
by combining the vertical Hodge-star identity \eqref{eq:vertical-star}
with the standard positive-current obstruction \cite{AB,Verbitsky}.

\vspace{0.5cm}

\noindent\textbf{Acknowledgments.}
The third named author would like to thank Gabriel Khan, Bo Yang, Xiaokui Yang, and Quanting Zhao for their interest, encouragement, and helpful discussions.

\vspace{0.3cm}

\noindent\textbf{Generative AI disclosure.}
During the development of this work, the authors used ChatGPT and Codex to assist with exploratory computations, possible proof directions, and editorial polishing. The authors wrote the mathematical proofs and computations, and take full responsibility for the contents of the paper.

\vspace{0.3cm}

\noindent\textbf{Declaration on competing interests.}
All authors declare that there are no competing interests for this paper.


\begin{thebibliography}{99}


\bibitem{AB} {L.~Alessandrini and G.~Bassanelli,
\emph{Positive $\partial\bar\partial$-closed currents and non-K\"ahler geometry,}
J. Geom. Anal. \textbf{2} (1992), no.~4, 291--316.}


\bibitem{BHH} F.~Balogh, J.~Harnad, and J.~Hurtubise, \emph{Isotropic Grassmannians, Pl\"ucker and Cartan maps,} J. Math. Phys. \textbf{62} (2021), 021701, doi:10.1063/5.0021269





\bibitem{Bismut} J.-M.~Bismut, \emph{A local index theorem for non-K\"ahler manifolds,}
Math. Ann. \textbf{284} (1989), no.~4, 681--699.



\bibitem {Boothby} W.~Boothby, \emph{Hermitian manifolds with zero curvature,} Michigan Math. J. {\bf 5} (1958), no.~2, 229--233.


\bibitem {BSV} L.~Borisov, S.~Salamon, and J.~Viaclovsky, \emph{Twistor geometry and warped product orthogonal complex structures,} Duke Math. J. \textbf{156} (2011), 125--166.

\bibitem{DHP} {J.-P.~Demailly, J.-M.~Hwang, and T.~Peternell,
\emph{Compact manifolds covered by a torus,}
J. Geom. Anal. \textbf{18} (2008), no.~2, 324--340.}


\bibitem {FinoVezzoni}  A.~Fino, and L.~Vezzoni, \emph{Special Hermitian metrics on compact solvmanifolds,} J. Geom. Phys. \textbf{91} (2015), 40--53.

\bibitem {FinoVezzoni1}  A.~Fino, and L.~Vezzoni, \emph{On the existence of balanced and SKT metrics on nilmanifolds,} Proc. Amer. Math. Soc., {\bf 144} (2016), no.~6, 2455--2459.





\bibitem {FuZhou} J.-X.~Fu and X.~Zhou, \emph{Scalar curvatures in almost Hermitian geometry and some applications,} Sci. China Math. \textbf{65} (2022), 2583--2600.

\bibitem{Fujiki} A.~Fujiki, \emph{On Automorphism Groups of Compact K\"ahler Manifolds,} Invent. Math. \textbf{44} (1978), no.~3, 225--258.



\bibitem{KYZ} G.~Khan, B.~Yang, and F.~Zheng, \emph{The set of all orthogonal complex structures on the flat 6-tori,} Adv. Math. \textbf{319} (2017), 451--471.

\bibitem{KuzSpinor} {A.~Kuznetsov,
\emph{On linear sections of the spinor tenfold. I,}
Izv. Math. \textbf{82} (2018), no.~4, 694--751.}


\bibitem {LS} R.~Lafuente and J.~Stanfield, \emph{Hermitian manifolds with flat Gauduchon connections,} Annali della Scuola Normale Superiore di Pisa, Classe di Scienze, 2024, Vol XXV, no.~4, 2241--2258.



\bibitem  {Milnor} J.~Milnor, \emph{Curvatures of left invariant metrics on Lie groups,} Advances in Math. \textbf{21} (1976), no.~3, 293--329.



\bibitem {Pittie} H.~Pittie, \emph{The Dolbeault-cohomology ring of a compact, even-dimensional Lie group,} Proc. Indian Acad. Sci. Math. Sci. \textbf{98} (1988), 117--152.


\bibitem {Salamon} S.~Salamon, \emph{Orthogonal complex structures, Differential geometry and applications,} (Brno, 1995), 103--117, Masaryk Univ., Brno, 1996.

\bibitem {Samelson} H.~Samelson, \emph{A class of complex analytic manifolds,} Portugaliae Math. \textbf{12} (1953), 129--132.




\bibitem{Strominger} A.~Strominger, \emph{Superstrings with torsion},
Nuclear Phys. B \textbf{274} (1986), no.~2, 253--284.


\bibitem{Verbitsky} {M.~Verbitsky,
\emph{Rational curves and special metrics on twistor spaces,}
Geom. Topol. \textbf{18} (2014), no.~2, 897--909.}



\bibitem {WYZ} Q.~Wang, B.~Yang, and F.~Zheng, \emph{On Bismut flat manifolds,}  Trans. Amer. Math. Soc. {\bf 373} (2020), 5747--5772.




\bibitem {YangZ} B.~Yang and F.~Zheng, {\emph{On curvature tensors of Hermitian manifolds,}} Comm. Anal. Geom. \textbf{26} (2018), no.~5, 1195--1222.


\bibitem {YangZ1} B.~Yang and F.~Zheng, {\emph{On compact Hermitian manifolds with flat Gauduchon connections,}} Acta Math. Sinica (English Series). \textbf{34} (2018), 1259--1268.



\bibitem{ZhaoZ} Q.~Zhao and F.~Zheng, \emph{On Gauduchon K\"ahler-like manifolds,}  J. Geom. Anal. \textbf{32} (2022), no.~4, Paper No.~110, 27 pp.

\end{thebibliography}
\end{document}